\documentclass[a4paper,10pt]{amsart}
\usepackage[arrow,matrix]{xy}
\usepackage{amsmath,amssymb,amscd,bbm,amsthm,mathrsfs,dsfont,hyperref}
\theoremstyle{plain} \textwidth=31pc \textheight=51pc

\topmargin=0mm \oddsidemargin=1.5cm \evensidemargin=1.4cm
\parskip=6pt

\usepackage{verbatim}
\numberwithin{equation}{section}
\def\eref #1{(\ref{#1})}

\theoremstyle{definition}
\newtheorem{definition}{Definition}[section]
\newtheorem{proposition}[definition]{Proposition}
\newtheorem{lemma}[definition]{Lemma}
\newtheorem{theorem}[definition]{Theorem}
\newtheorem{example}[definition]{Example}
\newtheorem{corollary}[definition]{Corollary}
\newtheorem{remark}[definition]{Remark}

\newcommand{\rmnum}[1]{\romannumeral #1}
\newcommand{\Rmnum}[1]{\expandafter\@slowromancap\romannumeral #1@}

\makeatletter
\def\rightharpoondownfill@{%
    \arrowfill@\relbar\relbar\rightharpoondown}
\def\rightharpoonupfill@{%
    \arrowfill@\relbar\relbar\rightarrow}
\def\leftharpoondownfill@{%
    \arrowfill@\leftarrow\relbar\relbar}
\def\leftharpoonupfill@{%
    \arrowfill@\leftharpoonup\relbar\relbar}
\newcommand{\xrightharpoondown}[2][]{%
    \ext@arrow 0359\rightharpoondownfill@{#1}{#2}}
\newcommand{\xrightharpoonup}[2][]{%
    \ext@arrow 0359\rightharpoonupfill@{#1}{#2}}
\newcommand{\xleftharpoondown}[2][]{%
    \ext@arrow 3095\leftharpoondownfill@{#1}{#2}}
\newcommand{\xleftharpoonup}[2][]{%
    \ext@arrow 3095\leftharpoonupfill@{#1}{#2}}
\newcommand{\xleftrightharpoons}[2][]{\mathrel{%
    \raise.22ex\hbox{%
        $\ext@arrow 3095\leftharpoonupfill@{\phantom{#1}}{#2}$}%
    \setbox0=\hbox{%
        $\ext@arrow 0359\rightharpoondownfill@{#1}{\phantom{#2}}$}%
    \kern-\wd0 \lower.22ex\box0}%
}
\newcommand{\xrightleftharpoons}[2][]{\mathrel{%
    \raise.25ex\hbox{%
        $\ext@arrow 3095\rightharpoonupfill@{\phantom{#1}}{#2}$}%
    \setbox0=\hbox{%
        $\ext@arrow 0359\leftharpoondownfill@{#1}{\phantom{#2}}$}%
    \kern-\wd0 \lower.25ex\box0}%
}

\makeatother

\title[Castelnuovo-Mumford Regularity and AS-regular Algebras]{\bf Non-commutative Castelnuovo-Mumford Regularity and AS-regular Algebras}

\author{Z. -C. Dong}
\address{School of Mathematical Sciences, Fudan University, Shanghai 200433, China}
\email{dongzhicheng@msn.com}
\author{Q. -S. Wu}
\address{School of Mathematical Sciences, Fudan University, Shanghai 200433, China}
\email{qswu@fudan.edu.cn}

\date{}

\begin{document}
\begin{abstract} Let $A$ be a connected graded $k$-algebra
with a balanced dualizing complex. We prove that $A$ is a Koszul
AS-regular algebra if and only if that the Castelnuovo-Mumford
regularity and the Ext-regularity coincide for all finitely
generated $A$-modules. This can be viewed as a non-commutative
version of \cite[Theorem 1.3]{ro}. By using Castelnuovo-Mumford
regularity, we prove that any Koszul standard AS-Gorenstein algebra
is AS-regular. As a preparation to prove the main result, we also
prove the following statements are equivalent: (1) $A$ is
AS-Gorenstein; (2) $A$ has finite left injective dimension; (3) the
dualizing complex has finite left projective dimension. This
generalizes \cite[Corollary 5.9]{mori}.
\end{abstract}

\subjclass[2000]{16W50, 16E30, 16E65, 14A22}


\keywords{Non-commutative graded algebras,  Non-commutative
projective geometry, Castelnuovo-Mumford regularity, AS-Gorenstein
algebras, AS-regular algebras}

\maketitle

\section{Introduction}

A coherent sheaf $\mathcal{M}$ on $\mathds{P}^n$ is called
$m$-regular if $H^i(\mathcal{M}(m-i))=0$ for all $i \geq 1$. Mumford
proved a vanishing theorem \cite{mu}: if  $\mathcal{M}$ is
$m$-regular, then it is $\overline{m}$-regular for all $\overline{m}
> m$. Motivated by Mumford's vanishing theorem, a notion of regularity
was introduced by Eisenbud and Goto \cite{eg} for graded modules
over polynomial algebras. This notion of regularity is closely
related to the regularity of sheaves, also to the existence of
linear free resolutions of the truncations of graded modules and to
the degrees of generators of the syzygies. If $A=k[x_1, x_2, \cdots
, x_{n+1}]$, so that Proj $A=\mathds{P}^n$, and $M$ is an $A$-module
of the form $\oplus_{i \geq 0}H^0(\mathds{P}^n, \mathcal{O}_X(i))$
for some scheme $X \subset \mathds{P}^n,$ then the regularity of $M$
is the regularity of $X \subset \mathds{P}^n$ in the sense of
Castelnuovo, studied by Mumford \cite{mu} and many other authors in
literature. There are several competing definitions of
Castelnuovo-Mumford regularity for a graded module over a
commutative or non-commutative connected graded $k$-algebra $A$,
say, CM.reg${}_A M$ defined by using local cohomology (Definition
\ref{cm} and Remark \ref{cm local regularity}), Ext.reg${}_A M$
defined by Ext-group (Definition \ref{ext}) and Tor.reg${}_A M$
defined by Tor-group (Remark \ref{ext tor regularity}). The
Ext-regularity and Tor-regularity of a finitely generated graded
module are always the same. If $A$ is a polynomial algebra with
standard grading, Eisenbud and Goto \cite{eg} proved that
$\mathrm{CM.reg}\ M = \mathrm{Tor.reg}\ M$ for all non-zero finitely
generated graded $A$-modules. R\"{o}mer \cite{ro} proved that the
converse is true, i.e., if $A$ is a commutative connected graded
$k$-algebra generated in degree $1$, such that $\mathrm{CM.reg}\ M =
\mathrm{Tor.reg}\ M$ holds for all non-zero finitely generated
graded $A$-modules, then $A$ is a polynomial algebra with standard
grading (see Theorem \ref{tim romer 4.1}). The non-commutative
Castelnuovo-Mumford regularity was first studied by J${\o}$rgensen
\cite{jo3,jo4}. J${\o}$rgensen proved a version of Mumford's
vanishing theorem for non-commutative projective schemes
\cite[Theorem 2.4 and Corollary 3.3]{jo3}. Under the assumption that
$A$ is a Noetherian  connected graded $k$-algebra having a balanced
dualizing complex, J${\o}$rgensen proved  that for any finitely
generated non-zero $A$-module $M$, $- \mathrm{CM.reg}\ A \leq
\mathrm{Ext.reg}\ M - \mathrm{CM.reg}\ M \leq \mathrm{Ext.reg}\ k$
\cite[Theorem 2.5 ,2.6]{jo4}. In the commutative case, this was also
proved by R\"{o}mer \cite[Theorem 1.2]{ro}. For a Noetherian
connected graded $k$-algebra $A$, by a result of Van den Bergh
\cite[Theorem 6.3]{vdb}, $A$ has a balanced dualizing complex if and
only if that $A$ has finite left and right local cohomology
dimension (see Definition \ref{local cohomology}) and $A$ satisfies
the left and right $\chi$-condition (see Definition \ref{x
condition}). The $\chi$-condition is trivially true when $A$ is
commutative. If $A$ is commutative, then the Krull dimension of $A$
is finite. By a theorem of Grothendieck (see \cite[Theorem
3.5.7]{bh}), the local cohomology dimension of $A$ is finite. These
conditions are very natural and important in non-commutative
projective geometry \cite{az}. Under these assumptions, in this
article, we will prove a non-commutative version (Theorem \ref{main
theorem}) of  R\"{o}mer's result \cite[Theorem 1.3]{ro}.

\noindent {\bf Theorem.} Let $A$ be a Noetherian  connected graded
$k$-algebra having a balanced dualizing complex. Then $A$ is Koszul
AS-regular if and only if $\mathrm{CM.reg}\ M=\mathrm{Ext.reg}\ M$
holds for all non-zero finitely generated graded $A$-modules.

Our proof is different from  R\"{o}mer's in the commutative case.
R\"{o}mer used the following fact in his proof: every commutative
Noetherian  connected graded $k$-algebra can be viewed as a graded
quotient of some polynomial algebra. However, in the non-commutative
case, it is still an unsolved problem whether a Noetherian connected
graded $k$-algebra with a balanced dualizing complex can be viewed
as a graded quotient of an AS-Gorenstein algebra. Actually we will
prove that (\rmnum 3)$\Leftrightarrow$(\rmnum 4)
$\Leftrightarrow$(\rmnum 5) in \cite[Theorem 4.1]{ro} (see also
Theorem \ref{tim romer 4.1}) holds in the non-commutative case. Note
that (\rmnum 1)$\Leftrightarrow$(\rmnum 2) is always true. However,
(\rmnum 1)$\Leftrightarrow$(\rmnum 3) does not hold in general (see
Remark \ref{koszul AS-regular algebras}). Since in the commutative
case, the following crucial fact is used in the R\"{o}mer's proof:
Let $A$ be a commutative Noetherian  connected graded $k$-algebra
generated in degree $1$, then  $A$ is Koszul if and only if
$\mathrm{Ext.reg}\ k<\infty$. This was first a conjecture in
\cite{ae}, and finally proved by L. L. Avramov and I. Peeva in
\cite{ap}. However, this fact is not true in the non-commutative
case since $\mathrm{Ext.reg}\ k<\infty$ holds for those non-Koszul
AS-regular algebras (see Proposition \ref{tim romer theorem 4.2} and
Remark \ref{koszul AS-regular algebras}).

Before giving the proof of the main result, we prove the following
result (Theorem \ref{standard Koszul AS-Gorenstein}) by using
Catelnuovo-Mumford regularity, which is of independent interest.

\noindent {\bf Theorem.} Any Koszul standard AS-Gorenstein algebra
is AS-regular.

An AS-Gorenstein algebra of type $(d, l)$ is called standard if
$d=l$ (Definition  \ref{AS-Gorenstein algebra}).

We also generalize a result of Mori {\cite[Corollary 5.9]{mori}} as
a preparation to the proof of the main result (Theorem \ref{Mori's
Theorem}).

\noindent {\bf Theorem.} Let $A$ be a connected graded $k$-algebra
with a balanced dualizing complex. Then  $A$ is AS-Gorenstein if and
only if that $A$ has finite left injective resolution, if and only
if that the dualizing complex has finite left projective dimension.

\section{Notations and Preliminaries}
Throughout this article, $k$ is a fixed field.
A $k$-algebra $A$ is called an {\it $\mathbb N$-graded $k$-algebra}
if $A$, as a $k$-vector space, has the form $A=\bigoplus_{i\geq
0}A_i$ such that $A_iA_j\subseteq A_{i+j}$ for all $i, j \geq 0.$ If
further, $A_0=k$, then $A$ is called {\it connected graded}. A left
$A$-module $M$ is called a {\it graded $A$-module} if $M$, as a
$k$-vector space, has the form $M=\bigoplus_{i=-\infty}^{\infty}M_i$
such that $A_iM_j\subseteq M_{i+j}$ for all $i\geq 0$ and $j \in
\mathbb Z.$ Right graded $A$-modules are defined similarly.

The opposite algebra of $A$ is denoted by $A^{\mathrm{o}}$; it is
the same as $A$ as $k$-vector spaces, but the product is given by $a
\cdot b = ba$. A right graded $A$-module can be identified with a
left graded $A^{\mathrm{o}}$-module. Unless otherwise stated, we are
working with left modules. We use the term Noetherian for two-sided
Noetherian.

Let $A=\bigoplus_{i\geq 0}A_i$ be an $\mathbb N$-graded $k$-algebra.
We write $\mathfrak m=\bigoplus_{i\geq 1}A_i$. When $A$ is connected
graded, the graded module ${}_Ak \cong A/\mathfrak m$ is called the
{\it trivial module}. A graded module
$M=\bigoplus_{i=-\infty}^{\infty}M_i$ is called {\it left-bounded}
if $M_i=0$ for $i\ll 0$; {\it right-boundedness} and {\it
boundedness} are defined similarly. $M$ is called {\it locally
finite} if each graded piece $M_i$ is a finite dimensional
$k$-vector space.  For any fixed $n \in \mathbb Z$, we use the
notations $M_{>n}=\bigoplus_{i=n+1}^{\infty}M_i$,
$M_{<n}=\bigoplus_{i=-\infty}^{n-1}M_i$ and $M_{\neq
n}=\bigoplus_{i\neq n}M_i$. For any graded $A$-module $M$, the
$n$-th {\it shift} $M(n)$ of $M$, is defined by $M(n)_i=M_{n+i}$.

Let $M$ and $N$ be graded $A$-modules. A left $A$-module
homomorphism $f: M \rightarrow N$ is said to be {\it a graded
homomorphism of degree $l$} if $f(M_i)\subseteq N_{i+l}$ for all
$i\in \mathbb Z$. Let $\mathrm{Gr}\,A$  denote the category of
graded left $A$-modules and graded homomorphisms of degree $0$. Let
$\mathrm{gr}\,A$ be the full subcategory of $\mathrm{Gr}\,A$
consisting of finitely generated graded $A$-modules. Let
$\mathrm{Hom}_{\mathrm{Gr}\,A}(-, -)$ be the homo-functor in the
category $\mathrm{Gr}\,A$.


Let $A$ and $B$ be $\mathbb N$-graded $k$-algebras.  The category
$\mathrm{Gr}\,A^{\mathrm{o}}$ is identified with the category of
graded right $A$-modules; the category
$\mathrm{Gr}\,A\otimes_kB^{\mathrm o}$ is identified with the
category of graded $A$-$B$-bimodules. In particular, the category of
graded $A$-$A$-bimodules is denoted by $\mathrm{Gr}\,A^e$, where
$A^e=A\otimes_k A^{\mathrm{o}}$. The natural restriction functors
are denoted by
$$\mathrm{res}_A: \mathrm{Gr}\ A\otimes_kB^{\mathrm o}\longrightarrow \mathrm{Gr}\ A$$and
$$\mathrm{res}_{B^{\mathrm o}}: \mathrm{Gr}\ A\otimes_kB^{\mathrm o}\longrightarrow \mathrm{Gr}\ B^{\mathrm o}.$$

We define the graded Hom-functor by
$$\mathrm{\underline{Hom}}_A(M, N)=\bigoplus\limits_{n=-\infty}^{\infty}\mathrm{Hom}_{\mathrm{Gr\,A}}(M, N(n)).$$
$\mathrm{\underline{Hom}}_A(M, N)$ is naturally a graded left
$B$-module if $M$ is a graded $A$-$B$-bimodule and a graded right
$C$-module if $N$ is a graded $A$-$C$-bimodule.

For any  $M\in \mathrm{Gr}\,A\otimes_kB^{\mathrm o}$,
$M^\prime=\mathrm{\underline{Hom}}_k(M ,k)$ is called its {\it
Matlis dual}. By definition, $M^\prime$ becomes a graded
$B$-$A$-bimodule. Thus there is an exact contravariant  functor from
$\mathrm{Gr}\,A \otimes_k B^{\mathrm o}$ into $\mathrm{Gr}\,B
\otimes_k A^{\mathrm{o}}$. If $M$ is locally finite, then
$M^{\prime\prime}\cong M$ as $A$-$B$-bimodules.

Let $X^\cdot, Y^\cdot$ be cochain complexes of graded left
$A$-modules. The $i$-th cohomology module of $X^\cdot$ is denoted by
$h^i(X^\cdot)$. A morphism of complexes  $f: X^\cdot \longrightarrow
Y^\cdot$ is called a {\it quasi-isomorphism} if $h^i(f):
h^i(X^\cdot) \longrightarrow h^i(Y^\cdot)$ is an isomorphism for all
$i\in \mathbb Z$. Shifting of complexes is denoted by $[ \ ]$ so
that $(X^\cdot[n])^p=X^{n+p}$. We denote by
$Z^n(X^\cdot)=\mathrm{Ker}(d_{X^\cdot}^n)$ and
$B^n(X^\cdot)=\mathrm{Im}(d_{X^\cdot}^{n-1})$. Let $X^{\geq n}$
denote the (brutal) truncated complex
$$0\longrightarrow X^n \longrightarrow X^{n+1} \longrightarrow \cdots,$$
and let $X^{\leq n}$ denote the complex
$$\cdots \longrightarrow X^{n-1} \longrightarrow X^{n} \longrightarrow 0.$$

The homotopy category of $\mathrm{Gr}\,A$ is denoted by
$K(\mathrm{Gr}\,A)$ and the derived category of $\mathrm{Gr}\,A$ is
denoted by $\mathrm{D}(\mathrm{Gr}\,A)$. For any $X^\cdot \in
\mathrm{D}(\mathrm{Gr}\,A)$, we define
$$\sup X^\cdot = \sup\{i\ |\ h^i(X^\cdot)\neq 0\}~\mathrm{and}~
\inf X^\cdot = \inf\{i\ |\ h^i(X^\cdot)\neq 0\}.$$ When $\sup
X^\cdot<\infty (\mathrm{resp}. \ \inf X^\cdot>-\infty)$, $X^\cdot$
is said to be {\it bounded above} (resp. {\it bounded below}).
$X^\cdot$ is said to be {\it bounded} if $X^\cdot$ is both bounded
above and bounded below. In this case, amp\,$X^\cdot =\sup X^\cdot -
\inf X^\cdot $ is called the {\it amplitude} of $X^\cdot.$ There are
various full subcategories $\mathrm{D}^*(\mathrm{Gr}\,A)$ of
$\mathrm{D}(\mathrm{Gr}\,A)$, where $*=-, +, b$, consisting of
bounded above, bounded below and bounded complexes respectively.
There are also full subcategories $\mathrm{D}_*(\mathrm{Gr}\,A)$ of
$\mathrm{D}(\mathrm{Gr}\,A)$, where $*=$ fg, lf, consisting of
complexes with finitely generated cohomologies and locally finite
cohomologies respectively.  Super- and subscripts are combined
freely.

Let $A$, $B$, and $C$ be $\mathbb N$-graded $k$-algebras. For any
$X^\cdot\in K(A\otimes_k B^{\mathrm o})$ and any $Y^\cdot\in
K(A\otimes_k C^{\mathrm o})$, $\mathrm{Hom}_A^{\cdot}(X^{\cdot},
Y^{\cdot})$ is a complex in $K(B\otimes_k C^{\mathrm o})$ where the
$n$-th term is
$$\mathrm{Hom}_A^{n}(X^{\cdot}, Y^{\cdot})=\prod_p\mathrm{\underline{Hom}}_A^{n}(X^{p}, Y^{p+n}),$$
and the differential $d_{\mathrm{Hom}_A^{\cdot}(X^{\cdot},
Y^{\cdot})}^n$ is
$$(f_p)_p \rightsquigarrow (f_{p+1}\cdot d_{X^{\cdot}}^{p} - (-1)^n d_{Y^{\cdot}}^{p+n}\cdot f_p)_p.$$
This induces a bi-$\partial$-functor,
$$\mathrm{Hom}_A^{\cdot}(-, -):\quad K(A\otimes_k B^{\mathrm o})^{\mathrm o}\times K(A\otimes_k C^{\mathrm o})
\longrightarrow K(B\otimes_k C^{\mathrm o}).$$
The right-derived functor of $\mathrm{Hom}^\cdot$ is denoted by
$R\mathrm{\underline{Hom}}$.

Similarly, for any $X^\cdot\in K(B\otimes_k A^{\mathrm o})$ and
$Y^\cdot \in K(A\otimes_k C^{\mathrm o})$, $X^\cdot\otimes_A Y^\cdot
\in K(B\otimes_k C^{\mathrm o})$, where the $n$-th component
$X^\cdot\otimes_A^n Y^\cdot$ of  $X^\cdot\otimes_A Y^\cdot $ is the
$k$-vector subspace generated by
$$\{x^p \otimes y^q \,|\, x^p \in X^p, y^q \in Y^q, p+q=n \}$$
and the tensor differential $d_{X^\cdot\otimes_A Y^\cdot} =
d_{X^\cdot} \otimes \textrm{id}_Y + \textrm{id}_X \otimes
d_{Y^\cdot}$ (with Koszul sign rule).
This induces a bi-$\partial$-functor,
$$-\otimes_A-:\quad K(B\otimes_k A^{\mathrm o})\times K(A\otimes_k C^{\mathrm o})
\longrightarrow K(B\otimes_k C^{\mathrm o}).$$
The left derived functor of $\otimes$ is denoted by $^L\otimes$.

The Ext and Tor are defined as
$$\mathrm{\underline{Ext}}_A^{i}(X^\cdot, Y^\cdot) =
h^i(R\mathrm{\underline{Hom}}_A(X^\cdot, Y^\cdot)) \quad \textrm
{and} \quad \mathrm{Tor}_i^A(X^\cdot, Y^\cdot) = h^{-i}(X^\cdot
{^L\otimes_A} Y^\cdot).$$

\begin{definition}\label{pd}
Let $A$ be an $\mathbb N$-graded $k$-algebra and let $X^\cdot\in
\mathrm{D}^{-}(\mathrm{Gr}\,A)$. A projective resolution of
$X^\cdot$ is a complex $P^\cdot$ consisting of projective modules
such that there is a quasi-isomorphism $f: P^\cdot
\stackrel{\simeq}{\longrightarrow} X^\cdot$. Moreover, if
$B^i(P^\cdot)\subseteq \mathfrak mP^i$ for each $i\in \mathbb Z$,
then $P^\cdot$ is called a minimal projective resolution of
$X^\cdot$.

We call
$$\mathrm{pd}_A(X^\cdot)=\inf_{P^\cdot}(-\inf\{i\ |P^i\neq 0\})$$
the projective dimension of $X^\cdot$, where the infimum is taken
over all projective resolutions $P^\cdot$ of $X^\cdot$.
\end{definition}

If $A$ is left Noetherian and  $X^\cdot\in
\mathrm{D}_{fg}^{-}(\mathrm{Gr}\,A)$, then
$$\mathrm{pd}_A(X^\cdot)=\sup \{i\,|\, \mathrm{\underline{Ext}}_A^i(X^\cdot, M) \neq 0 \, \text{for some}\,M \in \mathrm{gr}\, A\}. $$
If $P^\cdot \stackrel{\simeq}{\longrightarrow} X^\cdot$ is a minimal
free resolution of $X^\cdot$, then $\mathrm{pd}_A(X^\cdot)=-
\inf\{i\,|\,P^i \neq 0\}.$

\begin{definition}\label{id}
Let $A$ be an $\mathbb N$-graded $k$-algebra and let $X^\cdot\in
\mathrm{D}^{+}(\mathrm{Gr}\ A)$. An injective resolution of
$X^\cdot$ is a complex $I^\cdot$ consisting of injective modules
such that there is a quasi-isomorphism $f: X^\cdot
\stackrel{\simeq}{\longrightarrow} I^\cdot$. Moreover, if
$Z^i(I^\cdot)$ is graded essential in $I^i$
for each $i\in \mathbb Z$,
then $I^\cdot$ is called a minimal injective resolution of
$X^\cdot$.

We call
$$\mathrm{id}_A(X^\cdot)=\inf_{I^\cdot}(\sup\{i\ |I^i\neq 0\})$$
the injective dimension of $X^\cdot$, where the infimum is taken
over all injective resolutions $I^\cdot$ of $X^\cdot$.
\end{definition}

It is easy to know that
$$\mathrm{id}_A(X^\cdot)=\sup \{i\,|\, \mathrm{\underline{Ext}}_A^i(M, X^\cdot) \neq 0 \, \text{for some}\,M \in \mathrm{gr}\, A\}. $$

Next we collect some definitions and facts in connected graded ring
theory we need, which are of basic importance not only in this
article.

\begin{definition}{[AZ, Ye]}\label{local cohomology}
Let $A$ be an $\mathbb N$-graded $k$-algebra and $\mathfrak
m=A_{\geq 1}$.

(1) For any $M\in \mathrm{Gr}\,A$, the $\mathfrak m$-torsion
submodule of $M$ is defined to be
$$\mathrm{\Gamma}_{\mathfrak m}(M)=\{x\in M\ |\ A_{\geq n}\cdot x=0,\ \mathrm{for}\ n \gg 0\}.$$
If $\mathrm{\Gamma}_{\mathfrak m}(M)=M$, then $M$ is said to be
$\mathfrak m$-torsion.

(2) $\mathrm{\Gamma}_{\mathfrak m}: \mathrm{Gr}\,A \longrightarrow
\mathrm{Gr}\,A$, $M \mapsto \mathrm{\Gamma}_{\mathfrak m}(M)$, is a
left exact functor. Its right derived functor
$R\mathrm{\Gamma}_{\mathfrak m}$ is defined on the derived category
$\mathrm{D}^{+}(\mathrm{Gr}\ A)$. The $i$-th local cohomology of
$X^\cdot\in \mathrm{D}^{+}(\mathrm{Gr}\ A)$ is defined to be
$$\mathrm{H}_{\mathfrak m}^{i}(X^\cdot)=h^i(R\mathrm{\Gamma}_{\mathfrak m}(X^\cdot)).$$

(3) The local cohomological dimension of a graded $A$-module $M$ is
defined to be
$$\mathrm{lcd}(M)=\sup \{i\ |\ \mathrm{H}_{\mathfrak m}^{i}(M)\neq 0\}.$$

(4) The cohomological dimension of $\mathrm{\Gamma}_{\mathfrak m}$
is defined to be
$$\mathrm{cd}(\mathrm{\Gamma}_{\mathfrak m})=\sup \{\mathrm{lcd}(M)|\ M\in \mathrm{Gr}\,A\}.$$

(5) $\mathrm{cd}(\mathrm{\Gamma}_{\mathfrak m})$ is also called the
left local cohomological dimension of the algebra $A$.
\end{definition}

\begin{definition}{[AZ, 3.2]}\label{x condition}
Let $A$ be a Noetherian  connected graded $k$-algebra. Then $A$ is
said to satisfy the $\chi$-condition if
$\mathrm{\underline{Ext}}_A^{i}(k, M)$ is right bounded for any $M
\in \mathrm{gr}\,A$ and $i\in \mathbb Z$.
\end{definition}

\begin{definition}\label{depth}
Let $A$ be a connected graded $k$-algebra. For any $X^\cdot\in
\mathrm{D}^+(\mathrm{Gr}\,A)$,
$$\mathrm{depth}_A(X^{\cdot}) =\inf R\mathrm{\underline{Hom}}_A(k, X^{\cdot})=\inf \{i\ |\ \mathrm{\underline{Ext}}_A^i(k, X^{\cdot})\neq 0\}.$$
$\mathrm{depth}_A(X^{\cdot})$ is either an integer or $\infty$.
\end{definition}


\begin{lemma}\label{depth and local cohomology}
Let $A$ be a left Noetherian  connected graded $k$-algebra. Then for
any $M\in \mathrm{Gr}\,A$, $\mathrm{depth}_A(M)=\inf \{i\in \mathbb
Z\ |\ \mathrm{H}_{\mathfrak m}^i(M)\neq 0\}$.
\end{lemma}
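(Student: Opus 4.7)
The plan is to use the hypercohomology (hyper-Ext) spectral sequence that links $R\mathrm{\underline{Hom}}_A(k,-)$ to local cohomology. Set $e=\mathrm{depth}_A(M)$ and $d=\inf\{i\mid \mathrm{H}_{\mathfrak m}^i(M)\neq 0\}$; we may assume $M\neq 0$, since otherwise both sides equal $+\infty$.

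First I would establish the isomorphism $R\mathrm{\underline{Hom}}_A(k, M)\simeq R\mathrm{\underline{Hom}}_A(k, R\mathrm{\Gamma}_{\mathfrak m}(M))$. Choose an injective resolution $I^{\cdot}$ of $M$. Since $A$ is left Noetherian, $\mathrm{\Gamma}_{\mathfrak m}$ sends graded injectives to graded injectives, so $\mathrm{\Gamma}_{\mathfrak m}(I^{\cdot})$ is an injective resolution of $R\mathrm{\Gamma}_{\mathfrak m}(M)$. Because $k$ is $\mathfrak m$-torsion, every graded $A$-linear map $k\to I^i$ factors through $\mathrm{\Gamma}_{\mathfrak m}(I^i)$, giving $\mathrm{\underline{Hom}}_A(k,I^{\cdot})=\mathrm{\underline{Hom}}_A(k,\mathrm{\Gamma}_{\mathfrak m}(I^{\cdot}))$. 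Taking cohomology then yields the standard second-quadrant spectral sequence
$$E_2^{p,q}=\mathrm{\underline{Ext}}_A^p(k,\mathrm{H}_{\mathfrak m}^q(M))\Rightarrow \mathrm{\underline{Ext}}_A^{p+q}(k,M).$$

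For the inequality $e\geq d$: whenever $p+q<d$, either $q<d$ (so $\mathrm{H}_{\mathfrak m}^q(M)=0$) or $p<0$; in either case $E_2^{p,q}=0$, and hence $\mathrm{\underline{Ext}}_A^i(k,M)=0$ for all $i<d$. This also handles $d=\infty$. For $e\leq d$ (now $d<\infty$), I would show that $E_2^{0,d}=\mathrm{\underline{Hom}}_A(k,\mathrm{H}_{\mathfrak m}^d(M))$ is non-zero and is a permanent cycle. Non-vanishing rests on the general observation that any non-zero $\mathfrak m$-torsion graded $A$-module $N$ contains a graded copy of $k$: pick a homogeneous $0\neq y\in N$ and the smallest $n\geq 1$ with $A_{\geq n}y=0$; then $A_{\geq n-1}y\neq 0$, and any homogeneous $0\neq x\in A_{\geq n-1}y$ satisfies $\mathfrak m x=A_{\geq 1}A_{\geq n-1}y\subseteq A_{\geq n}y=0$, so $x$ spans a shift of $k$ inside $N$. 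Applying this to $N=\mathrm{H}_{\mathfrak m}^d(M)$ gives $E_2^{0,d}\neq 0$. Survival to $E_{\infty}$ is automatic: incoming differentials to $E_r^{0,d}$ come from columns with $p<0$ and vanish, while outgoing differentials land in $\mathrm{\underline{Ext}}_A^r(k,\mathrm{H}_{\mathfrak m}^{d-r+1}(M))=0$ for $r\geq 2$ by the definition of $d$. Hence $\mathrm{\underline{Ext}}_A^d(k,M)\neq 0$, so $e\leq d$.

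The main technical point is confirming that $\mathrm{\Gamma}_{\mathfrak m}$ preserves graded injectives under left-Noetherianness (the standard Bass-style argument using that every graded left ideal is finitely generated) and then tracking the edge behaviour at $(0,d)$ of the hyper-Ext spectral sequence; once those are in place, both inequalities follow formally.
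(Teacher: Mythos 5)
The paper states this lemma without giving any proof (it is treated as a standard fact), so there is no in-paper argument to compare against; judged on its own, your proof is correct and is the standard one. The composite-functor (Grothendieck) spectral sequence $E_2^{p,q}=\mathrm{\underline{Ext}}_A^p(k,\mathrm{H}_{\mathfrak m}^q(M))\Rightarrow \mathrm{\underline{Ext}}_A^{p+q}(k,M)$ is legitimate because, as you note, $\mathrm{\underline{Hom}}_A(k,-)=\mathrm{\underline{Hom}}_A(k,\mathrm{\Gamma}_{\mathfrak m}(-))$ and $\mathrm{\Gamma}_{\mathfrak m}$ carries graded injectives to graded injectives over a left Noetherian connected graded algebra; this last point, which you correctly single out as the only real technical input, is exactly the stability of the $\mathfrak m$-torsion theory proved in [AZ] and [Ye], so you may simply cite it rather than reprove it. Both halves of your argument check out: the vanishing of every $E_2^{p,q}$ with $p+q<d$ (either $q<d$ or $p<0$) gives $\mathrm{depth}_A(M)\ge d$ including the degenerate cases $M=0$ and $d=\infty$, and the socle argument showing that a non-zero $\mathfrak m$-torsion graded module contains a shifted copy of $k$ makes $E_2^{0,d}\neq 0$, which survives to $E_\infty$ for the positional reasons you give, yielding $\mathrm{\underline{Ext}}_A^d(k,M)\neq 0$ and hence the reverse inequality.
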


%
\begin{theorem}{(The Auslander-Buchsbaum formula)}\label{Auslander-Buchsbaum-theorem}
Let $A$ be a left Noetherian  connected graded $k$-algebra
satisfying the $\chi$-condition. Given any $X^\cdot\in
\mathrm{D_{fg}^b}(\mathrm{Gr}\,A)$ with
$\mathrm{pd}_A(X^\cdot)<\infty$, one has
$$\mathrm{pd}_A(X^\cdot)+\mathrm{depth}_A(X^\cdot)=\mathrm{depth}_A(A).$$
\end{theorem}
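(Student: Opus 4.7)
The plan is to analyze a hyper-Ext spectral sequence associated to a minimal graded free resolution of $X^\cdot$. Since $\mathrm{pd}_A(X^\cdot)=n<\infty$ and $X^\cdot\in\mathrm{D_{fg}^b}(\mathrm{Gr}\,A)$, one can choose a minimal graded free resolution $P^\cdot\stackrel{\simeq}{\longrightarrow}X^\cdot$ that is a bounded complex of finitely generated graded free modules, with $\inf\{p:P^p\neq 0\}=-n$; each $P^p$ is a finite direct sum of shifts of $A$, and, by minimality, all entries of the resolution differentials lie in $\mathfrak m$.

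Consider the hyper-Ext spectral sequence
\[
E_1^{p,q}=\mathrm{\underline{Ext}}_A^q(k,P^p)\ \Longrightarrow\ \mathrm{\underline{Ext}}_A^{p+q}(k,X^\cdot),
\]
which converges since $P^\cdot$ is bounded. Each $P^p$ being a direct sum of shifts of $A$ gives $\mathrm{\underline{Ext}}_A^q(k,P^p)=0$ for $q<\mathrm{depth}_A(A)$, so combined with $P^p=0$ for $p<-n$ the $E_1$-page is supported in the region $\{p\geq -n,\ q\geq\mathrm{depth}_A(A)\}$. This alone forces $\mathrm{\underline{Ext}}_A^i(k,X^\cdot)=0$ for $i<\mathrm{depth}_A(A)-n$ and hence yields the inequality $\mathrm{depth}_A(X^\cdot)\geq\mathrm{depth}_A(A)-\mathrm{pd}_A(X^\cdot)$.

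For the reverse inequality I would focus on the corner $(p,q)=(-n,\mathrm{depth}_A(A))$. By the support shape, no differential on any page enters or leaves this corner, so
\[
E_\infty^{-n,\mathrm{depth}_A(A)}=\ker\bigl(\mathrm{\underline{Ext}}_A^{\mathrm{depth}_A(A)}(k,P^{-n})\longrightarrow\mathrm{\underline{Ext}}_A^{\mathrm{depth}_A(A)}(k,P^{-n+1})\bigr),
\]
and showing this kernel is nonzero will produce $\mathrm{\underline{Ext}}_A^{\mathrm{depth}_A(A)-n}(k,X^\cdot)\neq 0$, completing the proof. Minimality turns this $d_1$-differential into right multiplication by matrix entries from $\mathfrak m$ acting on $\mathrm{\underline{Ext}}_A^{\mathrm{depth}_A(A)}(k,A)$. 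This step is the main obstacle: in the commutative setting $\mathrm{\underline{Ext}}^{\mathrm{depth}(A)}_A(k,A)$ is annihilated by $\mathfrak m$ from the right and the kernel is automatic, but in the non-commutative setting the right $A$-module structure need not be $\mathfrak m$-torsion. The $\chi$-condition is the intended replacement: it guarantees that $\mathrm{\underline{Ext}}_A^i(k,A)$ is right bounded as a graded $k$-vector space, and combining this bound with a careful tracking of internal degrees---using that every nonzero entry of $d_1$ has strictly positive degree---should single out a top-internal-degree class in $\mathrm{\underline{Ext}}_A^{\mathrm{depth}_A(A)}(k,P^{-n})$ whose image in $\mathrm{\underline{Ext}}_A^{\mathrm{depth}_A(A)}(k,P^{-n+1})$ vanishes on pure degree grounds.
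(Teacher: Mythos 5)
The paper does not prove this statement at all --- it simply cites J{\o}rgensen [Jo2, Theorem 3.2] --- so there is nothing internal to compare against; what matters is whether your argument stands on its own, and it does. The spectral sequence $E_1^{p,q}=\mathrm{\underline{Ext}}_A^q(k,P^p)\Rightarrow\mathrm{\underline{Ext}}_A^{p+q}(k,X^\cdot)$ converges because $P^\cdot$ is bounded, the support region $\{p\ge -n,\ q\ge \mathrm{depth}_A(A)\}$ gives the inequality $\mathrm{depth}_A(X^\cdot)\ge\mathrm{depth}_A(A)-n$, and the corner analysis is right (one wording slip: a differential \emph{does} leave the corner on page $1$, which is exactly why $E_\infty^{-n,d}=\ker d_1$ rather than all of $E_1^{-n,d}$; no differential enters on any page and none leaves on pages $\ge 2$). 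The step you flag as the main obstacle in fact closes cleanly along the lines you indicate, and you should not present it as uncertain: write $E=\mathrm{\underline{Ext}}_A^{d}(k,A)$ with $d=\mathrm{depth}_A(A)$ (if $d=\infty$ the whole $E_1$-page vanishes and both sides of the formula are $+\infty$, so assume $d<\infty$, whence $E\neq 0$). The $\chi$-condition makes $E$ right bounded, so there is a top internal degree $t$ with $E_t\neq 0$ and $E_{>t}=0$. Writing $P^{-n}=\bigoplus_\alpha A(-a_\alpha)$ and $P^{-n+1}=\bigoplus_\beta A(-b_\beta)$, the differential is a matrix $(m_{\beta\alpha})$ with each nonzero $m_{\beta\alpha}\in A_{a_\alpha-b_\beta}$ and $a_\alpha-b_\beta\ge 1$ by minimality; choosing any one summand $\alpha_0$ and any $0\neq x\in E_t$ placed in the component $E(-a_{\alpha_0})$ of $E_1^{-n,d}=\bigoplus_\alpha E(-a_{\alpha_0})$, the image under $d_1$ has $\beta$-component $x\cdot m_{\beta\alpha_0}\in E_{t+a_{\alpha_0}-b_\beta}\subseteq E_{\ge t+1}=0$. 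Hence $E_\infty^{-n,d}\neq 0$, which is the only $E_\infty$-term on the antidiagonal $p+q=d-n$, so $\mathrm{\underline{Ext}}_A^{d-n}(k,X^\cdot)\neq 0$ and $\mathrm{depth}_A(X^\cdot)\le d-n$. So your proposal is a complete and correct self-contained proof; its value over the paper's treatment is precisely that it isolates where the $\chi$-condition enters (right boundedness of $\mathrm{\underline{Ext}}_A^d(k,A)$ substituting for the commutative fact that this module is killed by $\mathfrak m$).
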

\begin{proof}[Proof]
See \cite[Theorem 3.2]{jo1}.
\end{proof}

\begin{definition}{[Ye, 3.3]}\label{dualizing complex}
Let $A$ be a Noetherian connected graded $k$-algebra. A complex
$R^\cdot \in \mathrm{D}^{\mathrm{b}}(\mathrm{Gr}\,A^e)$ is called a
dualizing complex if it satisfies the following conditions:

(1) $\mathrm{id}_A(R^\cdot)<\infty$ \,  and \,
$\mathrm{id}_{A^{\mathrm{o}}}(R^\cdot)<\infty$;

(2) res$_A (R^\cdot)\in
\mathrm{D}_{\mathrm{fg}}^{\mathrm{b}}(\mathrm{Gr}\,A)$ \, and \,
res$_{A^{\mathrm{o}}} (R^\cdot)\in
\mathrm{D}_{\mathrm{fg}}^{\mathrm{b}}(\mathrm{Gr}\,A^{\mathrm{o}})$;

(3) The natural morphisms $A \rightarrow
R\mathrm{\underline{Hom}}_A(R^\cdot,R^\cdot)$ and $A \rightarrow
R\mathrm{\underline{Hom}}_{A^{\mathrm{o}}}(R^\cdot,R^\cdot)$ are
isomorphisms in $\mathrm{D}^{\mathrm{b}}(\mathrm{Gr}\,A^e)$.
\end{definition}

\begin{definition}{[Ye, 4.1]}\label{balanced dualizing complex}
Let $A$ be a Noetherian  connected graded $k$-algebra and
$R^\cdot\in \mathrm{D}^{\mathrm{b}}(\mathrm{Gr}\,A^e)$ be a
dualizing complex over $A$. If there are isomorphisms
$R\mathrm{\Gamma}_{\mathfrak m}(R^\cdot)\cong
R\mathrm{\Gamma}_{\mathfrak m^{\mathrm o}}(R^\cdot)\cong A^\prime$
in $\mathrm{D}(\mathrm{Gr}\,A^e)$, then $R^\cdot$ is called
balanced.

%
\end{definition}

\begin{theorem}{[VdB, 6.3]}\label{vdb}
Let $A$ be a Noetherian  connected graded $k$-algebra. Then $A$ has
a balanced dualizing complex if and only if the following two
conditions are satisfied:

(1) $\mathrm{cd}(\mathrm{\Gamma}_{\mathfrak m}) <\infty$, \, and \,
$\mathrm{cd}(\mathrm{\Gamma}_{\mathfrak m^{\mathrm o}})<\infty$;

(2) Both $A$ and $A^{\mathrm o}$ satisfy the $\chi$-condition.

If these conditions are satisfied, then the balanced dualizing
complex over $A$ is given by $R\mathrm{\Gamma}_{\mathfrak
m}(A)^\prime$.
\end{theorem}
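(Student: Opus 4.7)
The plan is to prove both implications, with the natural candidate for the balanced dualizing complex being $R^\cdot:=R\mathrm{\Gamma}_{\mathfrak m}(A)^\prime$. For the necessity direction, I would first establish a non-commutative local duality formula $R\mathrm{\Gamma}_{\mathfrak m}(M)\simeq R\mathrm{\underline{Hom}}_A(M,R^\cdot)^\prime$ for $M\in\mathrm{D}^{+}(\mathrm{Gr}\,A)$, obtained by combining the tensor-Hom adjunction (valid after dualizing on locally finite complexes, where Matlis duality is involutive) with the balanced hypothesis $R\mathrm{\Gamma}_{\mathfrak m}(A)\simeq (R^\cdot)^\prime$. Since $\mathrm{id}_A(R^\cdot)<\infty$ by axiom (1) of Definition \ref{dualizing complex}, the right-hand side vanishes above a fixed degree for every $M$, which yields $\mathrm{cd}(\mathrm{\Gamma}_{\mathfrak m})<\infty$; the symmetric argument gives $\mathrm{cd}(\mathrm{\Gamma}_{\mathfrak m^{\mathrm o}})<\infty$. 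The $\chi$-condition follows by taking $M=k$: the finite generation of $h^i(R^\cdot)$ provided by axiom (2) translates, via Matlis duality, into right boundedness of $\mathrm{\underline{Ext}}_A^i(k,N)$ for every $N\in\mathrm{gr}\,A$.

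For the sufficiency, set $D:=R\mathrm{\Gamma}_{\mathfrak m}(A)$, chosen as a representative respecting the $A$-$A$-bimodule structure via a K-injective resolution in $\mathrm{Gr}\,A^e$, so that $R^\cdot:=D^\prime$ lies naturally in $\mathrm{D}^{\mathrm{b}}(\mathrm{Gr}\,A^e)$; condition (1) forces $D$, hence $R^\cdot$, to be bounded. Three axioms must then be verified. Finite left injective dimension of $R^\cdot$ follows from the adjunction-induced formula $R\mathrm{\underline{Hom}}_A(M,R^\cdot)\simeq R\mathrm{\Gamma}_{\mathfrak m}(M)^\prime$ together with $\mathrm{cd}(\mathrm{\Gamma}_{\mathfrak m})<\infty$, and symmetrically on the right. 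Finite generation of the cohomologies $h^i(R^\cdot)$ on each side follows from the $\chi$-condition via a spectral-sequence computation of $\mathrm{\underline{Ext}}_A^i(k,D^\prime)$. The biduality isomorphism $A\xrightarrow{\simeq}R\mathrm{\underline{Hom}}_A(R^\cdot,R^\cdot)$ is obtained by a d\'evissage through the standard $t$-structure on $\mathrm{D}^{\mathrm{b}}_{\mathrm{fg}}(\mathrm{Gr}\,A)$, reducing to the verification on $M=k$. The balanced property $R\mathrm{\Gamma}_{\mathfrak m}(R^\cdot)\simeq A^\prime$ then drops out by combining the construction with biduality: $R\mathrm{\Gamma}_{\mathfrak m}(R^\cdot)\simeq R\mathrm{\underline{Hom}}_A(R^\cdot,R^\cdot)^\prime\simeq A^\prime$.

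The main obstacle I anticipate is the bimodule-compatible construction of $R\mathrm{\Gamma}_{\mathfrak m}(A)$: one must compute the derived functor while retaining the right $A$-module structure, so that $D$ lives in $\mathrm{D}(\mathrm{Gr}\,A^e)$ rather than merely in $\mathrm{D}(\mathrm{Gr}\,A)$, and so that the isomorphism $R\mathrm{\Gamma}_{\mathfrak m}(A)\simeq R\mathrm{\Gamma}_{\mathfrak m^{\mathrm o}}(A)$ needed for the balanced property is genuinely a bimodule map. In the commutative case this is routine, but in the non-commutative setting it is the technical heart of Van den Bergh's original argument, handled by choosing K-injective resolutions in the bimodule category $\mathrm{Gr}\,A^e$ and carefully tracking compatibility of the left and right torsion functors.
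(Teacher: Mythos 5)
The paper itself offers no proof of this statement: it is quoted from Van den Bergh as a black box (just as the local duality theorem, Theorem \ref{local duality}, is quoted with proof deferred to \cite{vdb}). So your attempt can only be measured against Van den Bergh's original argument, of which it is a recognizable outline; in particular you correctly identify the genuine technical heart, namely realizing $R\mathrm{\Gamma}_{\mathfrak m}(A)$ in $\mathrm{D}(\mathrm{Gr}\,A^e)$ and the bimodule symmetry $R\mathrm{\Gamma}_{\mathfrak m}(A)\cong R\mathrm{\Gamma}_{\mathfrak m^{\mathrm o}}(A)$, and your derivations of $\mathrm{id}_A(R^\cdot)<\infty$ and of the balanced property in the sufficiency direction are the right formal consequences of local duality and biduality.

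There is, however, a real gap in your necessity direction. The formula $R\mathrm{\Gamma}_{\mathfrak m}(M)\simeq R\mathrm{\underline{Hom}}_A(M,R^\cdot)^\prime$ is obtained from the adjunction $R\mathrm{\underline{Hom}}_A(M,R\mathrm{\Gamma}_{\mathfrak m}(A)^\prime)\cong (M\otimes_A^L R\mathrm{\Gamma}_{\mathfrak m}(A))^\prime$ only after one identifies $M\otimes_A^L R\mathrm{\Gamma}_{\mathfrak m}(A)$ with $R\mathrm{\Gamma}_{\mathfrak m}(M)$, and that identification is precisely where the hypothesis $\mathrm{cd}(\mathrm{\Gamma}_{\mathfrak m})<\infty$ enters in Van den Bergh's proof of local duality (it is needed to compute $R\mathrm{\Gamma}_{\mathfrak m}$ from an unbounded free resolution of $M$). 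Using this to prove $\mathrm{cd}(\mathrm{\Gamma}_{\mathfrak m})<\infty$ is circular. The repair goes through biduality instead: for $M\in\mathrm{gr}\,A$, write $R\mathrm{\Gamma}_{\mathfrak m}(M)\cong\varinjlim_n R\mathrm{\underline{Hom}}_A(A/A_{\geq n},M)$, substitute $M\cong R\mathrm{\underline{Hom}}_{A^{\mathrm o}}(M^\vee,R^\cdot)$ with $M^\vee=R\mathrm{\underline{Hom}}_A(M,R^\cdot)$, pull the colimit inside (legitimate because $M^\vee$ has finitely generated cohomologies), and use $\varinjlim_n R\mathrm{\underline{Hom}}_A(A/A_{\geq n},R^\cdot)\cong R\mathrm{\Gamma}_{\mathfrak m}(R^\cdot)\cong A^\prime$ together with $R\mathrm{\underline{Hom}}_{A^{\mathrm o}}(-,A^\prime)\cong(-)^\prime$. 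This yields local duality for finitely generated $M$ from the dualizing-complex axioms alone; finiteness of $\mathrm{cd}(\mathrm{\Gamma}_{\mathfrak m})$ then follows from $\mathrm{id}_A(R^\cdot)<\infty$ and extends to all modules by filtered colimits. The same mechanism, not merely ``taking $M=k$,'' is what gives $\chi$: one obtains $R\mathrm{\underline{Hom}}_A(k,N)\cong R\mathrm{\underline{Hom}}_{A^{\mathrm o}}(N^\vee,k)$, whose cohomologies are finite-dimensional. Finally, in the sufficiency direction the finite generation of $h^i(R\mathrm{\Gamma}_{\mathfrak m}(A)^\prime)$ over $A$ and over $A^{\mathrm o}$ is the other delicate point: the $\chi$-condition a priori gives only right-boundedness and local finiteness of $\mathrm{H}_{\mathfrak m}^i(A)$, and your one-line appeal to a spectral sequence for $\mathrm{\underline{Ext}}_A^i(k,D^\prime)$ does not yet close this.
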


\begin{theorem}{(The local duality theorem)}\label{local duality}
Let $A, C$ be  connected graded $k$-algebras. Assume that $A$ is
left Noetherian with $\mathrm{cd}(\mathrm{\Gamma}_{\mathfrak
m})<\infty$. Then for any $X^\cdot\in
\mathrm{D}(\mathrm{Gr}\,A\otimes_k C^{\mathrm o})$, there is an
isomorphism
$$R\mathrm{\Gamma}_{\mathfrak m}(X^\cdot)^\prime\cong R\mathrm{\underline{Hom}}_A(X^\cdot, R\mathrm{\Gamma}_{\mathfrak m}(A)^\prime)$$
in $\mathrm{D}(\mathrm{Gr}\,C\otimes_k A^{\mathrm o})$.
\end{theorem}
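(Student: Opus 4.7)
The plan is to establish the identity in two moves: first a projection-type formula
$$R\mathrm{\Gamma}_{\mathfrak m}(X^\cdot)\cong R\mathrm{\Gamma}_{\mathfrak m}(A)\otimes_A^L X^\cdot$$
in $\mathrm{D}(\mathrm{Gr}\,A\otimes_k C^{\mathrm o})$, then Matlis duality together with the standard tensor-hom adjunction. Once the projection formula is in hand, exactness of $(-)^\prime=\mathrm{\underline{Hom}}_k(-,k)$ (we work over a field) together with the ordinary adjunction
$$\mathrm{\underline{Hom}}_k(N\otimes_A M,\,k)\cong \mathrm{\underline{Hom}}_A\bigl(M,\,\mathrm{\underline{Hom}}_k(N,k)\bigr)$$
passes to the derived level and gives
$$\bigl(R\mathrm{\Gamma}_{\mathfrak m}(A)\otimes_A^L X^\cdot\bigr)^\prime\cong R\mathrm{\underline{Hom}}_A\bigl(X^\cdot,\,R\mathrm{\Gamma}_{\mathfrak m}(A)^\prime\bigr),$$
which is exactly the desired isomorphism. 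A quick bookkeeping of the $C$- and $A^{\mathrm o}$-actions confirms that both sides live in $\mathrm{D}(\mathrm{Gr}\,C\otimes_k A^{\mathrm o})$ and that the isomorphism respects these bimodule structures.

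For the projection formula itself, I would exploit the hypothesis $\mathrm{cd}(\mathrm{\Gamma}_{\mathfrak m})<\infty$ to replace $R\mathrm{\Gamma}_{\mathfrak m}(A)$ by a bounded complex $T^\cdot$ of graded $A$-$A$-bimodules that is flat as a right $A$-module. Concretely, using the presentation $\mathrm{\Gamma}_{\mathfrak m}(-)=\varinjlim_n\mathrm{\underline{Hom}}_A(A/A_{\geq n},-)$, one builds such a $T^\cdot$ from termwise projective $A^e$-resolutions of the tower $\{A/A_{\geq n}\}_n$, passes to the colimit, and truncates using the finite cohomological dimension—this is the same mechanism that underlies Van den Bergh's Theorem \ref{vdb}. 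The functor $T^\cdot\otimes_A-$ is then triangulated and way-out in both directions, and coincides with $R\mathrm{\Gamma}_{\mathfrak m}(-)$ at $X^\cdot=A$ by construction. Standard triangulated arguments (shifts, distinguished triangles, free resolutions) extend the agreement to bounded complexes of free $A\otimes_k C^{\mathrm o}$-modules, and a K-flat replacement of $X^\cdot$ combined with a Hartshorne-style way-out lemma extends it to all of $\mathrm{D}(\mathrm{Gr}\,A\otimes_k C^{\mathrm o})$.

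The main technical obstacle is the lack of any boundedness or finiteness hypothesis on $X^\cdot$: $R\mathrm{\Gamma}_{\mathfrak m}$ is defined only on $\mathrm{D}^{+}$ in Definition \ref{local cohomology}, so one must first argue that it extends to the full derived category and that the extension is computed by $T^\cdot\otimes_A-$. The finite cohomological dimension is exactly the input that makes this work—it produces a bounded right-flat bimodule model $T^\cdot$, so $T^\cdot\otimes_A-$ is automatically well-defined and way-out on both sides on all of $\mathrm{D}(\mathrm{Gr}\,A\otimes_k C^{\mathrm o})$, and the way-out lemma propagates the agreement from $A$ to arbitrary objects. Once the projection formula is proved in this generality, the Matlis-duality and adjunction step of the first paragraph closes the proof without further subtlety.
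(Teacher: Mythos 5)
The paper does not actually prove this theorem: its ``proof'' is the citation to \cite[Theorem 5.1]{vdb}, and your outline is essentially a reconstruction of Van den Bergh's argument there --- first the projection formula $R\mathrm{\Gamma}_{\mathfrak m}(X^\cdot)\cong R\mathrm{\Gamma}_{\mathfrak m}(A)\otimes_A^L X^\cdot$, obtained from a bounded-above right-flat bimodule model of $R\mathrm{\Gamma}_{\mathfrak m}(A)$ together with way-out arguments made available by $\mathrm{cd}(\mathrm{\Gamma}_{\mathfrak m})<\infty$, and then Matlis duality plus tensor-hom adjunction. So your route is the same as the cited one and is correct in outline; the only step you gloss over is that passing from agreement at $A$ to agreement on arbitrary (infinitely generated) free $A\otimes_k C^{\mathrm o}$-modules requires the natural comparison map to commute with infinite direct sums, which is precisely where the left Noetherian hypothesis (so that $\mathrm{\Gamma}_{\mathfrak m}=\varinjlim_n\mathrm{\underline{Hom}}_A(A/A_{\geq n},-)$ commutes with filtered colimits) combines with the finite cohomological dimension.
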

\begin{proof}[Proof]
See \cite[Theorem 5.1]{vdb}.
\end{proof}

Our basic reference for homological algebra is \cite{we}.

\section{AS-Gorenstein algebras}

As a preparation to prove the main result (Theorem \ref{main
theorem}), we generalize a result of Mori \cite[Corollary 5.9]{mori}
in this section. Let's recall some definitions.


\begin{definition}\label{AS-Gorenstein algebra}
Let $A$ be a Noetherian  connected graded $k$-algebra. $A$ is called
left AS-Gorenstein (AS stands for Artin-Schelter) if

(1) $\mathrm{id}_A A = d < \infty;$

(2) $\mathrm{\underline{Ext}}_A^{i}(k, A) = \left\{
\begin{array}{ll}
0& i\neq d\\
k(l)& i=d
\end{array}
\right.$ for some $l\in \mathbb Z$.\\
\end{definition}

Right AS-Gorenstein algebras are defined similarly. $A$ is
AS-Gorenstein means that $A$ is both left and right AS-Gorenstein.

\begin{definition}\label{AS-Cohen-Macaulay}
Let $A$ be a Noetherian  connected graded $k$-algebra.

(1) $A$ is called AS-Cohen-Macaulay if $R\mathrm{\Gamma}_{\mathfrak
m}(A)$ is concentrated in one degree.

(2) An $A$-$A$-bimodule $\omega_A$ is called a balanced dualizing
module if $\omega_A[d]$ is a balanced dualizing complex over $A$ for
some integer $d$.

(3) $A$ is called balanced Cohen-Macaulay if it has a balanced
dualizing module.
\end{definition}

\begin{lemma}\label{balnaced cohen-macaulay}
Let $A$ be a Noetherian connected graded $k$-algebra with a balanced
dualizing complex $R^\cdot$. If pd$_A R^\cdot <\infty,$ then $A$ is
AS-Cohen-Macaulay and balanced Cohen-Macaulay.
\end{lemma}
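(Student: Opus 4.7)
The plan is to combine the Auslander-Buchsbaum formula with the local duality theorem in order to force $R^\cdot$ to be concentrated in a single cohomological degree. Since Theorem \ref{vdb} guarantees $R^\cdot \cong R\mathrm{\Gamma}_{\mathfrak m}(A)^\prime$, and Matlis duality is exact and degree-reversing, $R^\cdot$ is concentrated in one degree if and only if $R\mathrm{\Gamma}_{\mathfrak m}(A)$ is. So producing such a concentration yields both statements at once: AS-Cohen-Macaulayness from the concentration of $R\mathrm{\Gamma}_{\mathfrak m}(A)$, and balanced Cohen-Macaulayness with balanced dualizing module $\omega_A := \mathrm{H}^d_{\mathfrak m}(A)^\prime$, where $d = \mathrm{lcd}(A)$.

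First I would pin down the depth of $R^\cdot$. Since $k$ is $\mathfrak m$-torsion one has $R\mathrm{\Gamma}_{\mathfrak m}(k) = k$, so the local duality theorem (Theorem \ref{local duality}) applied with $X^\cdot = k$ gives
$$R\mathrm{\underline{Hom}}_A(k, R^\cdot) \cong R\mathrm{\Gamma}_{\mathfrak m}(k)^\prime = k^\prime = k,$$
which is concentrated in degree $0$; hence $\mathrm{depth}_A R^\cdot = 0$. Next, Theorem \ref{vdb} guarantees that $A$ satisfies the $\chi$-condition, and $R^\cdot \in \mathrm{D}^b_{\mathrm{fg}}(\mathrm{Gr}\,A)$ with $\mathrm{pd}_A R^\cdot < \infty$ by assumption, so Theorem \ref{Auslander-Buchsbaum-theorem} applies and gives
$$\mathrm{pd}_A R^\cdot = \mathrm{depth}_A A.$$

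Finally I would exploit the basic bound $\mathrm{pd}_A R^\cdot \geq -\inf R^\cdot$, valid for any bounded complex (take a minimal projective resolution: no cohomology can live below it, so $\inf R^\cdot \geq -\mathrm{pd}_A R^\cdot$). The degree-reversing Matlis dual formula $\mathrm{H}^i(R^\cdot) \cong \mathrm{H}^{-i}_{\mathfrak m}(A)^\prime$ converts this into $\mathrm{pd}_A R^\cdot \geq \mathrm{lcd}(A)$. Combined with the trivial inequality $\mathrm{depth}_A A \leq \mathrm{lcd}(A)$ coming from Lemma \ref{depth and local cohomology}, this sandwich forces $\mathrm{depth}_A A = \mathrm{lcd}(A)$, so $R\mathrm{\Gamma}_{\mathfrak m}(A)$ is concentrated in a single cohomological degree, delivering both conclusions. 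I expect the main obstacle to be purely bookkeeping: tracking signs and shifts in the Matlis-dual computation of $\mathrm{H}^i(R^\cdot)$, and checking that the hypotheses of Theorem \ref{Auslander-Buchsbaum-theorem} are all in force for $R^\cdot$ (in particular that its cohomologies are finitely generated and bounded, which is built into the definition of a dualizing complex).
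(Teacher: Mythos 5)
Your proposal is correct and follows essentially the same route as the paper's proof: local duality gives $\mathrm{depth}_A R^\cdot = 0$, the Auslander--Buchsbaum formula yields $\mathrm{pd}_A R^\cdot = \mathrm{depth}_A A$, and the bound $\mathrm{pd}_A R^\cdot \geq -\inf R^\cdot$ closes the sandwich. Your phrasing of the final step in terms of $\mathrm{lcd}(A)$ and $\mathrm{depth}_A A$ is just the Matlis-dual translation of the paper's comparison of $\inf R^\cdot$ and $\sup R^\cdot$.
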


\begin{proof}[Proof]
Let $F^\cdot\stackrel{\simeq}{\longrightarrow} R^\cdot$ be a
finitely generated minimal free resolution of $R^\cdot \in
\mathrm{D_{fg}^b}(\mathrm{Gr}\,A)$.
Then $$\mathrm{pd}_A R^\cdot = -\inf \{p\in \mathbb Z\ |\ F^p\neq
0\} \geq -\inf R^\cdot.$$

On the other hand, by the local duality theorem,
$R\mathrm{\Gamma}_{\mathfrak m}(k)^\prime\cong
R\mathrm{\underline{Hom}}_A(k, R^\cdot)$ (see Theorem \ref{local
duality}). Hence $\mathrm{depth}_A R^\cdot =0$. It follows from
Auslander-Buchsbaum formula (Theorem
\ref{Auslander-Buchsbaum-theorem}) that $-\inf R^\cdot \leq
\mathrm{pd}_A R^\cdot =\mathrm{depth}_A A$. By Lemma \ref{depth and
local cohomology} and $R^\cdot \cong R\mathrm{\Gamma}_{\mathfrak
m}(A)^\prime$, $$\mathrm{depth}_A A =\inf\{j\in \mathbb Z\ |\
\mathrm{H}_{\mathfrak m}^j(A)\neq 0\}=-\sup R^\cdot.$$

Therefore $\sup R^\cdot = \inf R^\cdot$, and so $A$ is
AS-Cohen-Macaulay and balanced Cohen-Macaulay.
\end{proof}

\begin{lemma}\label{wz}
Let $A$ be a Noetherian connected graded $k$-algebra with a
dualizing complex $R^\cdot$. Then for any $X \in
\mathrm{D^{b}_{fg}}(\mathrm{Gr}\,A)$, the following holds.

(1) id$_{A^{\mathrm{o}}} (\mathrm{RHom}_A(X, R)) \leq $ pd$_A \, X
+$ id$_{A^{\mathrm{o}}} \, R;$

(2) pd$_{A^{\mathrm{o}}} (\mathrm{RHom}_A(X, R)) \leq $ id$_A \, X -
\inf R.$

As a consequence, pd$_A \, X < \infty$ if and only if
id$_{A^{\mathrm{o}}} (\mathrm{RHom}_A(X, R)) < \infty.$
\end{lemma}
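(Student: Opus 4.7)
The plan is to prove (1) and (2) separately by bounding the $\mathrm{\underline{Ext}}$-modules that characterize injective (resp.\ projective) dimension, exploiting the $A$-$A$-bimodule structure of the dualizing complex $R$; the consequence will then follow from the duality isomorphism $R\mathrm{\underline{Hom}}_{A^{\mathrm{o}}}(R\mathrm{\underline{Hom}}_A(X, R), R) \cong X$ of Definition \ref{dualizing complex} (3).

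For (1), set $Y = R\mathrm{\underline{Hom}}_A(X, R) \in \mathrm{D^{b}_{fg}}(\mathrm{Gr}\,A^{\mathrm{o}})$. By the formula for $\mathrm{id}_{A^{\mathrm{o}}}$ recorded after Definition \ref{id}, it suffices to show $\mathrm{\underline{Ext}}^i_{A^{\mathrm{o}}}(M, Y) = 0$ for $i > \mathrm{pd}_A X + \mathrm{id}_{A^{\mathrm{o}}} R$ and every $M \in \mathrm{gr}\,A^{\mathrm{o}}$. I would first establish the Hom-swap isomorphism
$$R\mathrm{\underline{Hom}}_{A^{\mathrm{o}}}(M, R\mathrm{\underline{Hom}}_A(X, R)) \cong R\mathrm{\underline{Hom}}_A(X, R\mathrm{\underline{Hom}}_{A^{\mathrm{o}}}(M, R)),$$
which reflects the fact that both sides compute $A$-$A$-bilinear maps from $X \otimes_k M$ into $R$. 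Writing $W = R\mathrm{\underline{Hom}}_{A^{\mathrm{o}}}(M, R)$ and using an injective resolution of $R$ over $A^{\mathrm{o}}$ of length $\mathrm{id}_{A^{\mathrm{o}}} R$ gives $\sup W \leq \mathrm{id}_{A^{\mathrm{o}}} R$. The routine estimate $\sup R\mathrm{\underline{Hom}}_A(X, W) \leq \mathrm{pd}_A X + \sup W$, obtained from a projective resolution of $X$ with $P^i = 0$ for $i < -\mathrm{pd}_A X$, then delivers (1).

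For (2), let $d = \mathrm{id}_A X$ and $m = \inf R$. By the projective-dimension formula after Definition \ref{pd}, it is enough to show $\mathrm{\underline{Ext}}^i_{A^{\mathrm{o}}}(Y, N) = 0$ for $i > d - m$ and every $N \in \mathrm{gr}\,A^{\mathrm{o}}$. The key move is to replace $N$ by its double dual $N \cong R\mathrm{\underline{Hom}}_A(R\mathrm{\underline{Hom}}_{A^{\mathrm{o}}}(N, R), R)$ (valid because $R$ is dualizing), and then apply the same Hom-swap to obtain
$$R\mathrm{\underline{Hom}}_{A^{\mathrm{o}}}(Y, N) \cong R\mathrm{\underline{Hom}}_A(V, X), \qquad V := R\mathrm{\underline{Hom}}_{A^{\mathrm{o}}}(N, R).$$
An injective resolution of $R$ over $A^{\mathrm{o}}$ bounded below by $m$ yields $\inf V \geq m$, and the symmetric estimate $\sup R\mathrm{\underline{Hom}}_A(V, X) \leq d - \inf V$, coming from an injective resolution of $X$ with $I^j = 0$ for $j > d$, completes (2). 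The consequence then falls out: the forward direction is (1); for the converse, apply (2) with $A$ and $A^{\mathrm{o}}$ interchanged to $Y$, giving $\mathrm{pd}_A\bigl(R\mathrm{\underline{Hom}}_{A^{\mathrm{o}}}(Y, R)\bigr) < \infty$, after which the duality isomorphism identifies the argument with $X$.

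The main obstacle will be formulating the double-dual Hom-swap in step (2) cleanly at the level of the derived category while keeping the bimodule structure of $R$ fully tracked; in particular, one must verify that each $R\mathrm{\underline{Hom}}$ in the chain of isomorphisms carries the correct $A$- versus $A^{\mathrm{o}}$-structure, and that the injective resolution of $R$ used for the $\inf V$ estimate can indeed be chosen so that its lowest nonzero term sits in degree $\inf R$.
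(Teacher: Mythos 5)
Your argument is correct, and in fact it is essentially the standard proof: the paper itself gives no proof here, deferring entirely to \cite[Lemma 2.1]{wz}, and your Hom-swap
$R\mathrm{\underline{Hom}}_{A^{\mathrm{o}}}(M, R\mathrm{\underline{Hom}}_A(X, R)) \cong R\mathrm{\underline{Hom}}_A(X, R\mathrm{\underline{Hom}}_{A^{\mathrm{o}}}(M, R))$ combined with the boundedness estimates and biduality is exactly how that lemma is established. Two small points to tighten: the biduality $N \cong R\mathrm{\underline{Hom}}_A(R\mathrm{\underline{Hom}}_{A^{\mathrm{o}}}(N, R), R)$ for arbitrary $N \in \mathrm{D^b_{fg}}$ is a consequence of Definition \ref{dualizing complex}(3) via the duality theorem for dualizing complexes (Yekutieli), not literally part of the definition; and in each inequality you should first reduce to the case where the right-hand side is finite (otherwise there is nothing to prove), so that $X$ admits a bounded complex of finitely generated projectives (resp.\ a bounded injective resolution) and the derived Hom-swap and the truncation arguments for $\sup W$ and $\inf V$ go through without convergence issues.
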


\begin{proof}[Proof] See \cite[Lemma 2.1]{wz}.
\end{proof}

The following theorem says that if the balanced dualizing complex
has finite projective dimension, then $A$ is AS-Gorenstein. This is
a generation of \cite[Corollary 5.9]{mori},where $A$ is a Noetherian
balanced Cohen-Macaulay algebra. For the completeness, we give a
proof here.

\begin{theorem}\label{Mori's Theorem}
Let $A$ be a Noetherian connected graded $k$-algebra with a balanced
dualizing complex $R^\cdot$. Then the following are equivalent:

(1) $A$ is AS-Gorenstein;

(2) id$_A\, A < \infty;$

(3) pd$_A \, R^\cdot<\infty;$

(4) For any $X \in \mathrm{D^{b}_{fg}}(\mathrm{Gr}\,A)$,
pd$_A \,X  < \infty$ if and only if id$_A \,X <\infty$.
\end{theorem}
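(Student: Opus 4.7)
The plan is to establish the cycle $(1) \Rightarrow (2) \Rightarrow (3) \Rightarrow (1)$ together with $(3) \Leftrightarrow (4)$. The implication $(1) \Rightarrow (2)$ is immediate from Definition \ref{AS-Gorenstein algebra}. The three remaining non-trivial steps rest on the machinery already in place: Lemma \ref{balnaced cohen-macaulay}, which upgrades finite projective dimension of $R^\cdot$ to AS-Cohen-Macaulayness and produces a balanced dualizing bimodule $\omega_A$ with $R^\cdot \cong \omega_A[d]$; Lemma \ref{wz}, which exchanges finite pd and finite id across the duality $R\mathrm{\underline{Hom}}_A(-,R^\cdot)$; and the local duality theorem, which identifies $R\mathrm{\underline{Hom}}_A(-,R^\cdot)$ with the Matlis dual of local cohomology.

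For $(3) \Rightarrow (1)$, I would first apply Lemma \ref{balnaced cohen-macaulay} to get $R^\cdot \cong \omega_A[d]$. Setting $X = R^\cdot$ in Lemma \ref{wz}(1) yields
$$\mathrm{id}_{A^{\mathrm o}}\,A \;=\; \mathrm{id}_{A^{\mathrm o}}\,R\mathrm{\underline{Hom}}_A(R^\cdot,R^\cdot) \;\leq\; \mathrm{pd}_A\,R^\cdot + \mathrm{id}_{A^{\mathrm o}}\,R^\cdot < \infty,$$
and the analogous statement for right modules, combined with the side-symmetry of the dualizing module that follows from the balanced isomorphism $R\Gamma_{\mathfrak m}(A) \cong R\Gamma_{\mathfrak m^{\mathrm o}}(A)$, gives $\mathrm{id}_A\,A < \infty$. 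The Ext-structure of the trivial module is then extracted from local duality: $R\mathrm{\underline{Hom}}_A(k,R^\cdot) \cong R\Gamma_{\mathfrak m}(k)' = k$ concentrates $\mathrm{\underline{Ext}}_A^d(k,\omega_A)$ in one cohomological degree with value $k$, and transporting this through $A \cong R\mathrm{\underline{Hom}}_{A^{\mathrm o}}(\omega_A,\omega_A)$ via tensor-hom adjunction pins down $\mathrm{\underline{Ext}}_A^d(k,A) \cong k(l)$ for some integer $l$. For $(3) \Leftrightarrow (4)$: the direction $(4) \Rightarrow (3)$ is immediate on taking $X = R^\cdot$, which has finite injective dimension by the dualizing axiom, while $(3) \Rightarrow (4)$ proceeds by chasing finite pd and finite id for an arbitrary $X \in \mathrm{D^b_{fg}}(\mathrm{Gr}\,A)$ through $\Phi = R\mathrm{\underline{Hom}}_A(-,R^\cdot)$ and its right-sided inverse, using the hypothesis $\mathrm{pd}_A\,R^\cdot < \infty$ to close the loop.

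The main obstacle is $(2) \Rightarrow (3)$. A direct application of Lemma \ref{wz}(2) with $X = A$ gives only $\mathrm{pd}_{A^{\mathrm o}}\,R^\cdot \leq \mathrm{id}_A\,A - \inf R^\cdot < \infty$, which lives on the opposite side from what is required. To transfer to the left I would first use this together with the right-sided Lemma \ref{balnaced cohen-macaulay} to conclude that $A$ is AS-Cohen-Macaulay, so $R^\cdot \cong \omega_A[d]$, and then invoke the balanced symmetry $R\Gamma_{\mathfrak m}(A) \cong R\Gamma_{\mathfrak m^{\mathrm o}}(A)$ to promote this to $\mathrm{pd}_A\,\omega_A < \infty$. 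An alternative direct route is to fix a finite injective resolution $A \to I^\cdot$ of length $\mathrm{id}_A\,A$, apply $\Gamma_{\mathfrak m}$ termwise (effective since $\mathrm{cd}(\Gamma_{\mathfrak m}) < \infty$) to obtain a bounded representative of $R\Gamma_{\mathfrak m}(A)$, and then take Matlis duals, using the fact that each torsion injective in $\mathrm{Gr}\,A$ is a locally finite direct sum of shifts of the graded injective envelope of $k$ (whose Matlis dual is a shift of $A$ as a bimodule) to produce a bounded complex of bimodule projectives quasi-isomorphic to $R^\cdot$. The delicate point in either approach is the side-symmetry of the balanced dualizing module, which is precisely what distinguishes the connected graded case with balanced dualizing complex from a purely one-sided setting.
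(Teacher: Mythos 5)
Your decomposition $(1)\Rightarrow(2)\Rightarrow(3)\Rightarrow(1)$ plus $(3)\Leftrightarrow(4)$ differs from the paper's cycle $(1)\Rightarrow(4)\Rightarrow(3)\Rightarrow(2)\Rightarrow(1)$, and some individual steps are good (e.g.\ feeding $X=R^\cdot$ into Lemma \ref{wz}(1) to get $\mathrm{id}_{A^{\mathrm o}}A\le \mathrm{pd}_A R^\cdot+\mathrm{id}_{A^{\mathrm o}}R^\cdot<\infty$ is a clean way to extract \emph{right} injective dimension from (3)). But there is a genuine gap, and it occurs at exactly the point you flag as delicate: the left--right transfer. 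In both $(2)\Rightarrow(3)$ and $(3)\Rightarrow(1)$ you land on the wrong side (respectively $\mathrm{pd}_{A^{\mathrm o}}R^\cdot<\infty$ and $\mathrm{id}_{A^{\mathrm o}}A<\infty$) and then invoke ``balanced symmetry'' $R\mathrm{\Gamma}_{\mathfrak m}(A)\cong R\mathrm{\Gamma}_{\mathfrak m^{\mathrm o}}(A)$ to cross over. That isomorphism (which is \cite[Corollary 4.8]{vdb}, not the balancedness axiom itself) only tells you that the two sides produce the \emph{same bimodule} $\omega_A$; it does not transfer homological finiteness across sides, since a bimodule of finite projective dimension as a right module can perfectly well have infinite projective dimension as a left module. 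Indeed, the two-sidedness of conditions (2) and (3) is part of what the theorem is \emph{proving} (the paper points this out right after the proof), so assuming it is close to circular.

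The mechanism that actually closes this gap in the paper is the one you never invoke: the Auslander--Buchsbaum formula. From local duality, $\mathrm{depth}_A R^\cdot=0$, so $\mathrm{depth}_A\omega_A=\mathrm{depth}_A A$, and Theorem \ref{Auslander-Buchsbaum-theorem} then forces $\mathrm{pd}_A\omega_A=0$ whenever it is finite; thus $\omega_A$ is \emph{free} on the side where you have finite projective dimension, and freeness plus $\mathrm{id}_A R^\cdot<\infty$ immediately yields finite injective dimension of $A$ \emph{on that same side}. The legitimate side-transfer is then the chain $\mathrm{id}_A A<\infty \Rightarrow \mathrm{pd}_{A^{\mathrm o}}R^\cdot<\infty$ (Lemma \ref{wz}) $\Rightarrow$ $\omega_A$ right-free $\Rightarrow \mathrm{id}_{A^{\mathrm o}}A<\infty \Rightarrow \mathrm{pd}_A R^\cdot<\infty$, which is how the paper's $(2)\Rightarrow(1)$ proceeds. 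Your final Ext-computation in $(3)\Rightarrow(1)$ has the same dependency: to pass from $\mathrm{\underline{Ext}}_A^d(k,\omega_A)\cong k$ to $\mathrm{\underline{Ext}}_A^d(k,A)\cong k(l)$ you need $\omega_A\cong A(l)$ as a left module, i.e.\ precisely the freeness statement. (Your alternative route for $(2)\Rightarrow(3)$ via termwise $\mathrm{\Gamma}_{\mathfrak m}$ of an injective resolution is also unsound as stated: the resolution is one of left modules, so its Matlis dual carries no usable left $A$-structure, and the torsion parts $\mathrm{\Gamma}_{\mathfrak m}(I^j)\cong E(\mathrm{\underline{Ext}}_A^j(k,A))$ need not be finite sums of copies of $E(k)$.) Repair the proof by inserting the depth computation and the Auslander--Buchsbaum step at each crossing point; everything else can then be made to work.
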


\begin{proof}[Proof]
(1) $\Rightarrow$ (4). Suppose that $M \in \mathrm{gr}\,A$ with
pd$_A M < \infty$. Then $M$ has a finitely generated free resolution
of finite length. Since each term of the resolution has finite
injective dimension, so does $M$. By induction on the amplitude, it
is easy to see that pd$_A \,X  < \infty$ implies id$_A \,X <\infty$
for any $X \in \mathrm{D^{b}_{fg}}(\mathrm{Gr}\,A)$.

On the other hand, suppose first that $M \in \mathrm{gr}\,A$ with
id$_A M <\infty$. There is a convergent spectral sequence
$$E_2^{p,q}=\mathrm{\underline{Ext}}_A^p(\mathrm{\underline{Ext}}_{A^{\mathrm{o}}}^{-q}(k, A),
M) \Rightarrow \mathrm{Tor}^A_{-p-q}(k, M).
$$
Since  id$_A A < \infty$, $E_2^{p,q}$ = 0 for $q \ll 0.$ Hence
$\mathrm{Tor}^A_n(k, M) =0$ for $n \gg 0$ and so pd$_A M <\infty$.
Now suppose in general that $X \in
\mathrm{D^{b}_{fg}}(\mathrm{Gr}\,A)$ with id$_A \,X <\infty$. We
have to show that pd$_A \,X  < \infty$. Let $F^\cdot \cong X$ be a
finitely generated free resolution of $X$ and $s= \inf X$. It
suffices to prove the $s$-th syzygy of $F^\cdot$ has finite
projective dimension. Since id$_A A < \infty$ and id$_A \,X
<\infty$, any finitely generated free module has finite injective
dimension, and so the $s$-th syzygy of $F^\cdot$ has finite
injective dimension. The claim follows from the module case.

(4) $\Rightarrow$ (3). Since id$_A R^\cdot <\infty,$ it follows that
pd$_A R^\cdot <\infty.$

(3) $\Rightarrow$ (2). By lemma \ref{balnaced cohen-macaulay},
$\omega_A=H^{-d}(R^\cdot)$ is a balanced dualizing module of $A$,
where pd$_A R^\cdot = d$ and pd$_A(\omega_A) < \infty$. Since
depth$_A(\omega_A)= $depth$_A A$, it follows from
Auslander-Buchsbaum formula (Theorem
\ref{Auslander-Buchsbaum-theorem}) that pd$_A(\omega_A) =0$. So
$\omega_A$ is free. Since id$_A(\omega_A) < \infty$, it follows that
id$_A A < \infty.$

 (2) $\Rightarrow$ (1). Since $A$ has a balanced dualizing complex, $A$ satisfies the
 $\chi$-condition. Since id$_A A < \infty$, $c = \sup \{i\,|\,
\mathrm{\underline{Ext}}_A^i(k, A) \neq 0 \} < \infty.$ It follows
from the double Ext spectral sequence
\begin{displaymath}
E_2^{p,q}=\mathrm{\underline{Ext}}_A^p(\mathrm{\underline{Ext}}_{A^{\mathrm{o}}}^{-q}(k,
A), A) \Rightarrow \left\{
\begin{array}{ll}
0 & p + q \neq 0\\
k & p + q = 0,
\end{array}
\right.
\end{displaymath}
that $d := \mathrm{depth}_{A^{\mathrm{o}}} A < \infty$. Since
$A^{\mathrm{o}}$ satisfies the $\chi$-condition, it follows from the
definition of $c$ that $E_2^{c, -d} = E_{\infty}^{c, -d} \neq 0$. So
$c = d$, that is,
$$\mathrm{depth}_{A^{\mathrm{o}}} A = \sup \{i\,|\,
\mathrm{\underline{Ext}}_A^i(k, A) \neq 0 \}.$$

Since id$_A A < \infty$, by Lemma \ref{wz} pd$_{A^{\mathrm{o}}}
R^\cdot < \infty$. Similar to (3) $\Rightarrow$ (2), we have
id$_{A^{\mathrm{o}}} A < \infty.$ Thus by the left-right symmetric
version of the above spectral sequence, $$\mathrm{depth}_A A = \sup
\{i\,|\, \mathrm{\underline{Ext}}_{A^{\mathrm{o}}}^i(k, A) \neq 0
\}.$$  Hence
$$\mathrm{depth}_A A =
\sup \{i\,|\, \mathrm{\underline{Ext}}_A^i(k, A) \neq 0 \} =
\mathrm{depth}_{A^{\mathrm{o}}} A = \sup \{i\,|\,
\mathrm{\underline{Ext}}_{A^{\mathrm{o}}}^i(k, A) \neq 0 \}.$$
Therefore $A$ is AS Gorenstein.
\end{proof}

The idea in (2) $\Rightarrow$ (1) originates from \cite[Theorem
3.8]{sz}. The above result also tells us that if $A$ is a Noetherian
connected graded $k$-algebra with a balanced dualizing complex, then
$A$ is left AS-Gorenstein if and only if $A$ is right AS-Gorenstein.
This is \cite[Corollary 4.6]{jo5}. Note that the first condition in
Theorem \ref{Mori's Theorem} is two-sided, while the others are
one-sided. In particular, when $A$ has a balanced dualizing complex,
then $A$ has finite left injective dimension if and only if that $A$
has finite right injective dimension.

\section{Castelnuovo-Mumford regularity and AS-Gorenstein algebras }

In this section, we first recall the definitions and some facts of
Castelnuovo-Mumford regularity and Ext-regularity. Then we prove any
Koszul standard AS-Gorenstein algebra is AS-regular (see Definition
\ref{standard}). Note $A$ is a Noetherian connected graded
$k$-algebra as always in this article.

We fix the following conventions: $\inf \{ \varnothing \}=+\infty$,
$\inf \{ \mathbb Z \}=-\infty$.
\begin{definition}{\cite[2.1]{jo4}}\label{cm}
For any $X^{\cdot} \in \mathrm{D}(\mathrm{Gr}\,A)$, the
Castelnuovo-Mumford regularity of $X^{\cdot}$ is defined to be
$$\mathrm{CM.reg}\ X^\cdot = \inf\{p\in \mathbb{Z}\ |\ \mathrm{H}_{\mathfrak m}^i(X^\cdot)_{>p-i}=0,\forall\ i\in \mathbb Z\}.$$
\end{definition}

If $A$ has a balanced dualizing complex and $0\ncong
X^\cdot\in\mathrm{D_{fg}^b}(\mathrm{Gr}\ A)$, then $\mathrm{CM.reg}\
X^\cdot\neq - \infty$ by the local duality theorem (Theorem
\ref{local duality}) and $\mathrm{CM.reg}\ X^\cdot\neq + \infty$ by
Theorem \ref{vdb} (see \cite[Observation 2.3]{jo4}). Moreover, we
have $\mathrm{CM.reg}\ (_AA)=\mathrm{CM.reg}\ (A_A)$ by
\cite[Corollary 4.8]{vdb}.

\begin{remark}\label{cm local regularity}
R\"{o}mer used the notion of local-regularity in \cite[1.1]{ro}. For
any $M \in \mathrm{Gr}\,A$,  $\mathrm{reg}_A^L(M) = \inf \{p\in
\mathbb Z\ |\ \mathrm{H}_{\mathfrak m}^i(M)_{>p-i}=0 ,\forall\ i\geq
0\}$ is called the local-regularity of $M$. This is identical as the
definition of Castelnuovo-Mumford regularity above.
\end{remark}

\begin{example}\label{example1}
Since ${}_Ak$ is $\mathfrak m$-torsion, we have
$$\mathrm{H}_{\mathfrak m}^i({}_Ak)=
\left\{
\begin{array}{ll}
0 & i\neq 0\\
k & i=0.
\end{array}
\right.$$ By definition \ref{cm}, it is easy to see that
$\mathrm{CM.reg}\ {}_Ak=0$.
\end{example}

\begin{definition}{\cite[2.2]{jo4}}\label{ext}
For any $X^{\cdot} \in \mathrm{D}(\mathrm{Gr}\ A)$, the
Ext-regularity of $X^{\cdot}$ is defined to be
$$\mathrm{Ext.reg}\ X^\cdot = \inf\{p\in \mathbb{Z}\ |\ \mathrm{\underline{Ext}}_A^i(X^\cdot, k)_{<-p-i}=0,\forall\ i\in \mathbb Z\}.$$
\end{definition}

\begin{remark}\label{ext tor regularity} There is also a notion  of
regularity defined by Tor-group in literature, which is called the
Tor-regularity as in \cite[1.1]{ro} by R\"{o}mer. For any $M\in
\mathrm{Gr}\,A$, $\mathrm{Tor.reg}_A(M)$ or $\mathrm{reg}_A^T(M) =
\inf \{p\in \mathbb Z\ |\ \mathrm{Tor}_i^A(k_A, M)_{>p+i}=0
,\forall\ i\geq 0\}$ is called the Tor-regularity of $M$. If $M\in
\mathrm{gr}\,A$, then as graded $k$-vector spaces,
$$\mathrm{\underline{Ext}}_A^j(M, k)^{\prime} \cong \mathrm{Tor}_j^A(k_A, M).$$
Hence $\mathrm{Ext.reg}\ M=\mathrm{reg}_A^T(M)$, i.e., the
Tor-regularity is the same as the Ext-regularity for any $M\in
\mathrm{gr}\,A$.
\end{remark}

For all $0 \ncong X^\cdot \in \mathrm{D_{fg}^b}(\mathrm{Gr}\,A)$,
 $\mathrm{Ext.reg}\ X^\cdot\neq -\infty$. If $\mathrm{Ext.reg}\ X \leq p$
 and $F^{\cdot} \cong X^\cdot$ is a minimal free resolution of
 $X^\cdot$, then the generators of $F^i$ are
 concentrated in degree $\leq p-i$.
It is possible that
 $\mathrm{Ext.reg}\ X^\cdot = \infty$.
Note that $\mathrm{Ext.reg}\ (_Ak) = 0$ if and only if $A$ is
Koszul, i.e., ${}_Ak$ has a linear free resolution.
It was conjectured in \cite{ae} and proved in \cite{ap}
 that a commutative connected graded $k$-algebra generated in
 degree $1$ is Koszul if and only if that $\mathrm{Ext.reg}\ k < \infty$.

By using the minimal free resolution, we see that $\mathrm{Ext.reg}\
(_Ak)=\mathrm{Ext.reg}\ (k_A)$.

\begin{example}\label{example2}
Since ${}_AA$ is free,
$$\mathrm{\underline{Ext}}_{A}^i(A, k)=
\left\{
\begin{array}{ll}
0 & i\neq 0\\
k & i=0.
\end{array}
\right.$$ By Definition \ref{ext}, $\mathrm{Ext.reg}\ {}_AA=0$.
\end{example}

The following result was proved in {\cite[Theorem 2.5, 2.6]{jo4}},
which plays a key role in this article.
\begin{theorem}\label{jo}
Let $A$ be a Noetherian  connected graded $k$-algebra with a
balanced dualizing complex. Given any
$X^\cdot\in\mathrm{D_{fg}^b}(\mathrm{Gr}\,A)$ with $X^\cdot\ncong
0$. Then
$$- \,\mathrm{CM.reg}\ A \leq \mathrm{Ext.reg}\ X^\cdot-\mathrm{CM.reg}\ X^\cdot \leq \mathrm{Ext.reg}\ k.$$
\end{theorem}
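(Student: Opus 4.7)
The plan is to prove the two inequalities separately, each via a hypercohomology spectral sequence combined with the local duality theorem (Theorem \ref{local duality}). For the lower bound $-\mathrm{CM.reg}\ A \leq \mathrm{Ext.reg}\ X^\cdot - \mathrm{CM.reg}\ X^\cdot$, I would assume $\mathrm{Ext.reg}\ X^\cdot <\infty$ (else nothing to prove) and take a minimal graded free resolution $F^\cdot \stackrel{\simeq}{\longrightarrow} X^\cdot$ with $F^{-i} = \bigoplus_j A(-a_{i,j})$, so that $a_{i,j}-i \leq \mathrm{Ext.reg}\ X^\cdot$. Applying $R\mathrm{\Gamma}_{\mathfrak m}$ termwise yields the convergent hypercohomology spectral sequence
$$E_1^{p,q} = \mathrm{H}_{\mathfrak m}^{q}(F^p) = \bigoplus_j \mathrm{H}_{\mathfrak m}^{q}(A)(-a_{-p,j}) \Longrightarrow \mathrm{H}_{\mathfrak m}^{p+q}(X^\cdot),$$
whose convergence is guaranteed since $F^p = 0$ for $p>0$ and $\mathrm{cd}(\mathrm{\Gamma}_{\mathfrak m})<\infty$. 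A direct degree count using $\mathrm{CM.reg}\ A$ shows that each $E_1^{p,q}$ sits in internal degrees $\leq \mathrm{CM.reg}\ A + \mathrm{Ext.reg}\ X^\cdot - (p+q)$, and this bound passes through the $E_\infty$-page to $\mathrm{H}_{\mathfrak m}^{n}(X^\cdot)$, yielding $\mathrm{CM.reg}\ X^\cdot \leq \mathrm{CM.reg}\ A + \mathrm{Ext.reg}\ X^\cdot$.

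For the upper bound $\mathrm{Ext.reg}\ X^\cdot - \mathrm{CM.reg}\ X^\cdot \leq \mathrm{Ext.reg}\ k$, the central step is to establish the natural isomorphism
$$R\mathrm{\underline{Hom}}_A(X^\cdot, k) \cong \bigl(k \otimes^{L}_A R\mathrm{\Gamma}_{\mathfrak m}(X^\cdot)\bigr)^\prime$$
via the following chain: (a) local duality for $A^{\mathrm o}$ applied to $k$ gives $k \cong R\mathrm{\underline{Hom}}_{A^{\mathrm o}}(k, R^\cdot)$ as left $A$-modules, where $R^\cdot$ is the balanced dualizing complex; (b) substituting into the second variable and invoking the Hom-swap adjunction
$$R\mathrm{\underline{Hom}}_A\bigl(X^\cdot, R\mathrm{\underline{Hom}}_{A^{\mathrm o}}(k, R^\cdot)\bigr) \cong R\mathrm{\underline{Hom}}_{A^{\mathrm o}}\bigl(k, R\mathrm{\underline{Hom}}_A(X^\cdot, R^\cdot)\bigr);$$
(c) local duality applied to $X^\cdot$ replaces the inner Hom by $R\mathrm{\Gamma}_{\mathfrak m}(X^\cdot)^\prime$; (d) the Matlis-dual identity $R\mathrm{\underline{Hom}}_{A^{\mathrm o}}(k, M^\prime) \cong (k \otimes^{L}_A M)^\prime$ for a left $A$-module $M$ closes the chain. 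With this isomorphism, the hyper-Tor spectral sequence
$$E_2^{-p, q} = \mathrm{Tor}_p^A\bigl(k, \mathrm{H}_{\mathfrak m}^{q}(X^\cdot)\bigr) \Longrightarrow h^{q-p}\bigl(k \otimes^{L}_A R\mathrm{\Gamma}_{\mathfrak m}(X^\cdot)\bigr)$$
delivers the bound: $\mathrm{H}_{\mathfrak m}^{q}(X^\cdot)$ lives in internal degrees $\leq \mathrm{CM.reg}\ X^\cdot - q$ by definition, and the minimal graded free resolution of ${}_Ak$ controls the shifts appearing in $\mathrm{Tor}_p^A(k,-)$ (using $\mathrm{Ext.reg}\ {}_Ak = \mathrm{Ext.reg}\ k_A$), so each $E_2^{-p,q}$ sits in internal degrees $\leq \mathrm{CM.reg}\ X^\cdot + \mathrm{Ext.reg}\ k - (p+q)$. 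Taking Matlis duals converts the abutment bound into the desired Ext-regularity inequality.

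The delicate point will be the rigorous derivation of the key isomorphism in step (b), namely verifying the Hom-swap adjunction at the derived level and tracking the various left/right $A$-bimodule structures through the chain of identifications. Once this identity is secured, the two halves of the proof become essentially symmetric applications of hypercohomology spectral sequences together with local duality and elementary degree counting.
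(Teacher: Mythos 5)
The paper does not prove Theorem \ref{jo} at all: it is quoted verbatim from J{\o}rgensen \cite[Theorems 2.5 and 2.6]{jo4}, so there is no in-paper argument to compare against. Your outline is, in substance, a correct reconstruction of J{\o}rgensen's proof --- the lower bound via the hypercohomology spectral sequence of $R\mathrm{\Gamma}_{\mathfrak m}$ applied to a minimal free resolution, and the upper bound via the identity $R\mathrm{\underline{Hom}}_A(X^\cdot,k)\cong \bigl(k\, {^L\otimes_A}\, R\mathrm{\Gamma}_{\mathfrak m}(X^\cdot)\bigr)^{\prime}$; note that this identity is exactly the Matlis dual of the isomorphism $k\, {^L\otimes_A}\, R\mathrm{\Gamma}_{\mathfrak m}(X^\cdot)\cong k\, {^L\otimes_A}\, X^\cdot$, which follows more directly from Van den Bergh's $R\mathrm{\Gamma}_{\mathfrak m}(X^\cdot)\cong R\mathrm{\Gamma}_{\mathfrak m}(A)\, {^L\otimes_A}\, X^\cdot$ together with $k\, {^L\otimes_A}\, R\mathrm{\Gamma}_{\mathfrak m}(A)\cong R\mathrm{\Gamma}_{\mathfrak m^{\mathrm o}}(k)\cong k$, and this route lets you bypass the delicate derived Hom-swap in your step (b). Two small repairs: dispose of the degenerate cases first ($\mathrm{Ext.reg}\ X^\cdot=\infty$ for the left inequality and $\mathrm{Ext.reg}\ k=\infty$ for the right one are both trivial, since $\mathrm{CM.reg}\ X^\cdot$ and $\mathrm{CM.reg}\ A$ are finite here by \cite[Observation 2.3]{jo4}), and in the degree bound for your second spectral sequence the correction term should be $-(q-p)$, the negative of the total cohomological degree of $E_2^{-p,q}$, rather than $-(p+q)$.
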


If further, $A$ is Koszul and $\mathrm{CM.reg}\ A=0$, then
$\mathrm{CM.reg}\ X^\cdot=\mathrm{Ext.reg}\ X^\cdot$ for any $0
\ncong X^\cdot\in\mathrm{D_{fg}^b}(\mathrm{Gr}\ A).$

A left AS-Gorenstein algebra in the Definition \ref{AS-Gorenstein
algebra} is sometimes called of type $(d, l)$. If $A$ is left
AS-Gorenstein of type $(d, l)$ and right AS-Gorenstein, say of type
$(d', l')$, then it is well-known that $d = d'$ and $l = l'$.

\begin{definition}\label{standard}
Let $A$ be a Noetherian  connected graded $k$-algebra.

(1) $A$ is called {\it standard} AS-Gorenstein if $A$ is an
AS-Gorenstein algebra with $l=d$.

(2) $A$ is called AS-regular (Artin-Schelter regular) if $A$ is
AS-Gorenstein and $A$ has finite global dimension.

\end{definition}

%
%

It is an easy fact that any Koszul AS-regular algebra is standard
(see \ref{standard Koszul AS-Gorenstein}). However, a Koszul
AS-Gorenstein algebra is not always standard, e.g. $A=k[x]/(x^2)$ is
a Noetherian Koszul connected graded $k$-algebra which has infinite
global dimension. Moreover, $\mathrm{id}_A(A)=0$ and
$\mathrm{\underline{Ext}}_A^0(k, A) \cong k(-1)$. Thus, $A$ is a
non-standard AS-Gorenstein algebra with $l=-1$ and $d=0$. In the
final part of this section, we prove
that any Koszul standard AS-Gorenstein algebra is AS-regular.

\begin{lemma}\label{CM reg of AS-Gorenstein algebra}
Let $A$ be an AS-Gorenstein algebra of type $(d, l)$. Then
$$\mathrm{CM.reg\ A}=d-l.$$
\end{lemma}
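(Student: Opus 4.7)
The plan is to compute $R\Gamma_{\mathfrak m}(A)$ explicitly and then read the regularity off from the definition. Since $A$ is AS-Gorenstein of type $(d,l)$, Theorem~\ref{Mori's Theorem} gives $\mathrm{pd}_A R^\cdot < \infty$ for the balanced dualizing complex $R^\cdot$ of $A$. Lemma~\ref{balnaced cohen-macaulay} then forces $R^\cdot$ to be concentrated in a single cohomological degree, so $R^\cdot \simeq \omega_A[d]$ where $\omega_A = h^{-d}(R^\cdot)$ is the balanced dualizing module. The Auslander--Buchsbaum step appearing in the proof of (3)$\Rightarrow$(2) of Theorem~\ref{Mori's Theorem} shows that $\omega_A$ is graded free as a left $A$-module, and since it is a dualizing module it must be free of rank one, hence $\omega_A \cong A(s)$ as graded left $A$-modules for some $s \in \mathbb Z$.

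To pin down $s$, I would apply $R\mathrm{\underline{Hom}}_A(k,-)$ to $R^\cdot$. The AS-Gorenstein hypothesis gives $R\mathrm{\underline{Hom}}_A(k, R^\cdot) \cong R\mathrm{\underline{Hom}}_A(k, A(s)[d]) \cong k(l+s)$, while local duality (Theorem~\ref{local duality}) yields $R\mathrm{\underline{Hom}}_A(k, R^\cdot) \cong R\Gamma_{\mathfrak m}(k)' \cong k' \cong k$. Comparing shifts forces $s=-l$, so $\omega_A \cong A(-l)$ as graded left $A$-modules.

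Taking Matlis dual of $R^\cdot \simeq A(-l)[d]$ gives $R\Gamma_{\mathfrak m}(A) \cong (R^\cdot)' \cong A'(l)[-d]$, so $\mathrm{H}_{\mathfrak m}^d(A) \cong A'(l)$ and $\mathrm{H}_{\mathfrak m}^i(A) = 0$ for $i \neq d$. Since $A'(l)$ is supported in degrees $\leq -l$ with nonzero top piece in degree $-l$, the condition $\mathrm{H}_{\mathfrak m}^d(A)_{>p-d}=0$ amounts exactly to $p \geq d-l$, while the vanishing conditions at other $i$ are automatic. Therefore $\mathrm{CM.reg}\ A = d-l$.

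The only nontrivial step is the identification of the twist $s=-l$; everything else is direct bookkeeping from the definition of Castelnuovo--Mumford regularity. Alternatively, one could carry out the same analysis on the right and invoke $\mathrm{CM.reg}({}_AA)=\mathrm{CM.reg}(A_A)$ if convenient.
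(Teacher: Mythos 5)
Your argument is correct, but it is a genuinely different and considerably heavier route than the paper's. The paper simply takes a minimal injective resolution $0\to A\to I^0\to I^1\to\cdots$, observes that $\mathrm{\Gamma}_{\mathfrak m}(I^i)\cong E(\mathrm{\underline{Hom}}_A(k,I^i))$, and reads off $\mathrm{H}_{\mathfrak m}^i(A)\cong A'(l)$ for $i=d$ and $0$ otherwise directly from the Bass numbers supplied by the AS-Gorenstein condition; the regularity then falls out of Definition \ref{cm} exactly as in your last paragraph. You instead go through the balanced dualizing complex: Theorem \ref{Mori's Theorem} to get $\mathrm{pd}_A R^\cdot<\infty$, Lemma \ref{balnaced cohen-macaulay} and Auslander--Buchsbaum to get $R^\cdot\simeq A(s)[d]$, and local duality to pin down $s=-l$ (note that this same computation, not the bare phrase ``it is a dualizing module,'' is what actually forces the rank to be one, since $R\mathrm{\underline{Hom}}_A(k,\oplus_j A(s_j)[d])\cong\oplus_j k(l+s_j)$ must equal $k$). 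Both routes land on the same description of $\mathrm{H}_{\mathfrak m}^\bullet(A)$, and your endgame is identical to the paper's.

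The one point you must address is that every tool you invoke (Theorem \ref{Mori's Theorem}, Lemma \ref{balnaced cohen-macaulay}, Theorem \ref{local duality} in the form $R\mathrm{\Gamma}_{\mathfrak m}(A)'\cong R^\cdot$) presupposes that $A$ \emph{has} a balanced dualizing complex, which is not among the hypotheses of the lemma. This is in fact true for AS-Gorenstein algebras --- the paper itself uses it later, citing \cite[Corollary 4.10]{ye} and \cite[Theorem 1.2]{jo0} to the effect that $A_\alpha(-l)[d]$ is a balanced dualizing complex --- but you need to say so explicitly, otherwise your proof silently strengthens the hypotheses. The paper's injective-resolution computation avoids this dependency entirely, which is presumably why it was chosen; its only cost is the standard fact that the torsion part of a graded injective is the injective envelope of its socle.
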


\begin{proof}[Proof]Let
$$0\longrightarrow {_AA}\longrightarrow I^0\longrightarrow I^1\longrightarrow \cdots$$
be a minimal injective resolution of $_AA$. Then
$\mathrm{\underline{Ext}}_A^i(k,A)=\mathrm{\underline{Hom}}_A(k,I^i)$.
By Definition \ref{AS-Gorenstein algebra},
\begin{displaymath}
\mathrm{\underline{Ext}}_A^i(k,A)= \left\{
\begin{array}{ll}
0 & i\neq d\\
k(l) & i=d,
\end{array}
\right.
\end{displaymath}
where $d=\mathrm{id}_A(A)$.

Since $\mathrm{\Gamma}_{\mathfrak m}(I^i) \cong
E(\mathrm{\underline{Hom}}_A(k,I^i))$, then

\begin{displaymath}
\mathrm{H}_{\mathfrak m}^i(A)\cong \left\{
\begin{array}{ll}
0 & i\neq d\\
A^{\prime}(l) & i=d.
\end{array}
\right.
\end{displaymath}
By Definition \ref{cm}, it is obvious that $\mathrm{CM.reg\ A}=d-l$.
\end{proof}

\begin{corollary}\label{reg-equal}
Let $A$ be a Koszul standard AS-Gorenstein algebra. Then for any
$X^\cdot\in\mathrm{D_{fg}^b}(\mathrm{Gr}\ A)$ with $X^\cdot\ncong
0$, $$\mathrm{CM.reg\ X}=\mathrm{Ext.reg\ X}.$$
\end{corollary}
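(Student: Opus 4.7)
The plan is to reduce the corollary directly to the sandwich inequality of Theorem \ref{jo}; the Koszul and standard AS-Gorenstein hypotheses are precisely what force both endpoints of that sandwich to vanish.

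First I would verify that Theorem \ref{jo} actually applies, i.e.\ that the Koszul standard AS-Gorenstein algebra $A$ admits a balanced dualizing complex. Since $A$ is Noetherian and AS-Gorenstein, $\mathrm{id}_A A<\infty$ and $\mathrm{id}_{A^{\mathrm{o}}} A<\infty$ (the left-right symmetry being recorded in the discussion following Theorem \ref{Mori's Theorem}). The bounded-Ext description of $\mathrm{\underline{Ext}}_A^i(k,A)$ in Definition \ref{AS-Gorenstein algebra} is enough to give the $\chi$-condition on both sides, and finite injective dimension of $A$ yields $\mathrm{cd}(\Gamma_{\mathfrak m}),\,\mathrm{cd}(\Gamma_{\mathfrak m^{\mathrm o}})<\infty$, so Theorem \ref{vdb} supplies the balanced dualizing complex $R\Gamma_{\mathfrak m}(A)^\prime$.

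Next I would read off the two extremes of the sandwich from the hypotheses. The Koszul condition says by definition that ${}_Ak$ has a linear free resolution, which, as recalled just after Definition \ref{ext}, is the same as $\mathrm{Ext.reg}\ k = 0$. The standard condition $d=l$ plugged into Lemma \ref{CM reg of AS-Gorenstein algebra} yields $\mathrm{CM.reg}\ A = d-l = 0$. Substituting both into Theorem \ref{jo} produces
$$0 \;=\; -\,\mathrm{CM.reg}\ A \;\leq\; \mathrm{Ext.reg}\ X^\cdot - \mathrm{CM.reg}\ X^\cdot \;\leq\; \mathrm{Ext.reg}\ k \;=\; 0$$
for every nonzero $X^\cdot \in \mathrm{D^b_{fg}}(\mathrm{Gr}\,A)$, and the required equality falls out at once.

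There is no real obstacle here: the corollary is essentially the parenthetical observation already made immediately after Theorem \ref{jo} in the text, and the only work is the bookkeeping that collapses both ends of the sandwich to zero. The one point worth being careful about is the first step above, namely confirming that the balanced dualizing complex exists rather than having to assume it separately.
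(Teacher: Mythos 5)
Your proof is correct and is essentially the paper's own argument: the paper's proof of Corollary \ref{reg-equal} is literally ``Direct from Theorem \ref{jo} and Lemma \ref{CM reg of AS-Gorenstein algebra}'', i.e.\ both endpoints of the sandwich collapse because $A$ Koszul gives $\mathrm{Ext.reg}\ k=0$ and $A$ standard gives $\mathrm{CM.reg}\ A=d-l=0$. Your extra care about the existence of the balanced dualizing complex is reasonable, but the cleanest justification is the one the paper itself uses in the proof of Theorem \ref{standard Koszul AS-Gorenstein} (citing Yekutieli and J{\o}rgensen that $A_\alpha(-l)[d]$ is a balanced dualizing complex for an AS-Gorenstein algebra), since the $\chi$-condition concerns $\mathrm{\underline{Ext}}_A^i(k,M)$ for \emph{all} finitely generated $M$ and does not follow quite as immediately from the description of $\mathrm{\underline{Ext}}_A^i(k,A)$ as you suggest.
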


\begin{proof}[Proof] Direct from Theorem \ref{jo} and Lemma \ref{CM reg of AS-Gorenstein algebra}
\end{proof}

\begin{theorem}\label{standard Koszul AS-Gorenstein}
Let $A$ be a Noetherian connected graded $k$-algebra. If $A$ is
Koszul, then the following statements are equivalent:

(1) $A$ is AS-regular;

(2) $A$ is standard AS-Gorenstein.
\end{theorem}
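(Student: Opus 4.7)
The plan is to prove the two directions separately: (1)$\Rightarrow$(2) is a short degree inspection in the minimal resolution of ${}_Ak$, while (2)$\Rightarrow$(1) uses Corollary \ref{reg-equal} together with local duality to split free summands off the $d$-th syzygy of $k$ until it becomes free.

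For (1)$\Rightarrow$(2), suppose $A$ is Koszul AS-regular of dimension $d$. The minimal free resolution of ${}_Ak$ has the form $0\to A(-d)^{b_d}\to\cdots\to A(-1)^{b_1}\to A\to k\to 0$. Applying $\mathrm{\underline{Hom}}_A(-,A)$ yields a complex $0\to A\to A(1)^{b_1}\to\cdots\to A(d)^{b_d}\to 0$ whose only nonzero cohomology is $\mathrm{\underline{Ext}}_A^d(k,A)=k(l)$ at position $d$. I then read off the internal degree $-d$ component of the cokernel at the right end: the incoming differential vanishes because $A(d-1)_{-d}=A_{-1}=0$, so the cokernel in that degree is exactly $k^{b_d}$. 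Matching this with $k(l)_{-d}=k_{l-d}$ forces $l=d$ (and incidentally $b_d=1$), which is the standard condition.

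For (2)$\Rightarrow$(1), suppose $A$ is Koszul standard AS-Gorenstein of type $(d,d)$ and let $M_n=\Omega^n k$. The plan is to show that $M_d$ is either zero or free, giving $\mathrm{pd}_A\,k\le d$. The setup rests on four observations. First, since $A$ is Koszul, $M_n$ admits a linear minimal resolution generated in degree $n$, so $\mathrm{Ext.reg}\,M_n=n$; Corollary \ref{reg-equal} then yields $\mathrm{CM.reg}\,M_n=n$. Second, dimension shifting in the resolution of $k$ gives $\mathrm{\underline{Ext}}_A^i(M_n,A)\cong\mathrm{\underline{Ext}}_A^{i+n}(k,A)$ for $i\ge 1$, which vanishes whenever $n\ge d$. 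Third, the standard hypothesis forces the balanced dualizing complex to be $R^\cdot\cong A(-d)[d]$, and Theorem \ref{local duality} then collapses to $\mathrm{H}_{\mathfrak m}^d(M_n)\cong\mathrm{\underline{Hom}}_A(M_n,A)'(d)$ with vanishing in all other cohomological degrees. Fourth, the equality $\mathrm{CM.reg}\,M_n=n$ translates into $\min\{j:\mathrm{\underline{Hom}}_A(M_n,A)_j\ne 0\}=-n$, producing a nonzero degree-$0$ map $\phi:M_n\to A(-n)$; since $M_n$ is generated in degree $n$ and $A(-n)_n=k$, this $\phi$ is surjective, and freeness of $A(-n)$ splits it as $M_n\cong A(-n)\oplus M_n'$.

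The summand $M_n'$ is itself a direct summand of the minimal resolution of $M_n$, hence again Koszul in degree $n$ with $\mathrm{\underline{Ext}}_A^i(M_n',A)=0$ for $i\ge 1$, so all four observations reapply; provided $M_n'\ne 0$ one peels off another copy of $A(-n)$. Since $\beta_0$ drops by one at each step, the iteration terminates after $b_n$ rounds with $M_n\cong A(-n)^{b_n}$ free, and minimality of the resolution of $k$ then forces $M_{n+1}=0$, so $\mathrm{pd}_A\,k\le n$; taking $n=d$ finishes the proof. The main obstacle is making the iterative peeling rigorous: one has to verify that each residual summand $M_n^{(j)}$ again admits a linear minimal resolution (to keep $\mathrm{Ext.reg}\,M_n^{(j)}=n$) and continues to satisfy $\mathrm{\underline{Ext}}_A^i(M_n^{(j)},A)=0$ for $i\ge 1$ (to keep the local-duality computation valid), so that the regularity equality of Corollary \ref{reg-equal} can be reapplied to produce the next splitting.
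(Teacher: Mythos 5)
Your proposal is correct, and while direction (1)$\Rightarrow$(2) is essentially the paper's argument (dualize the linear resolution of ${}_Ak$ and read off the internal degree of the terminal cohomology to force $l=d$), your direction (2)$\Rightarrow$(1) takes a genuinely different route built from the same ingredients. The paper argues by contradiction: assuming the syzygy $Z_d=\Omega^{d+1}k$ is nonzero, it has $\mathrm{Ext.reg}=d+1$ by linearity, while local duality and the injection $\mathrm{H}_{\mathfrak m}^d(Z_d)\hookrightarrow \mathrm{H}_{\mathfrak m}^d(F_d)$ give $\mathrm{CM.reg}\,Z_d\le d$, contradicting Corollary \ref{reg-equal}. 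You instead work constructively with $\Omega^d k$: local duality (using $\mathrm{\underline{Ext}}_A^{\ge 1}(\Omega^d k,A)=0$) identifies $\mathrm{H}_{\mathfrak m}^d(\Omega^d k)$ with the Matlis dual of $\mathrm{\underline{Hom}}_A(\Omega^d k,A)$ up to shift, and the equality $\mathrm{CM.reg}=\mathrm{Ext.reg}=d$ produces a degree-zero surjection onto $A(-d)$ that splits off a free summand; iterating shows $\Omega^d k$ is free. Your approach is slightly longer because of the peeling iteration (whose bookkeeping you correctly identify as the delicate point: linearity and $\mathrm{\underline{Ext}}^{\ge 1}(-,A)=0$ do pass to direct summands, and the generator count in degree $d$ strictly drops, so the induction terminates), but it yields more, namely the explicit decomposition $\Omega^d k\cong A(-d)^{\beta_d}$, whereas the paper's contradiction only shows $\Omega^{d+1}k=0$. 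One cosmetic caveat: the balanced dualizing complex of a standard AS-Gorenstein algebra is $A_\alpha(-d)[d]$ for an automorphism $\alpha$ (as the paper notes via [Ye, Corollary 4.10] and [Jo1, Theorem 1.2]), not literally $A(-d)[d]$; the twist is invisible to your graded-vector-space degree computation, so this does not affect the argument, but it should be acknowledged.
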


\begin{proof}[Proof]
(2) $ \Rightarrow$ (1). It suffices to prove that $\mathrm{pd}{}_A \
k < \infty$. Since $A$ is a Koszul algebra, ${}_Ak$ has a minimal
free resolution
$$F_\cdot:\quad \cdots\longrightarrow F_i \longrightarrow \cdots \longrightarrow F_0 \longrightarrow k \longrightarrow 0,$$
where $F_i=A(-i)^{\beta_i}$. Suppose that $\mathrm{id}_A A=d.$ We
claim that the $d$-th syzygy $Z_d(F_\cdot)$ of $F_\cdot$ is $0$.
Assume on the contrary that $Z_d(F_\cdot)\neq 0.$ Note that
$$\cdots \longrightarrow F_{d+2} \longrightarrow F_{d+1} \longrightarrow Z_d(F_\cdot) \longrightarrow 0$$
is a minimal free resolution of $Z_d(F_\cdot)$.  Then it is easy to
see that $\mathrm{Ext.reg}\ Z_d(F_\cdot)=d+1$.

Since $\mathrm{\underline{Ext}}_A^{i}(k, A)=0$ for $i\neq d$,
$\mathrm{\underline{Ext}}_A^{i}(Z_{d-1}(F_\cdot), A)=0$ for $i\neq
0$. By \cite[Corollary 4.10]{ye} and \cite[Theorem 1.2]{jo0},
$A_\alpha(-d)[d]$ is a balanced dualizing complex over $A$, where
$\alpha$ is an automorphism of $A$ as a graded $k$-algebra. It
follows from the local duality theorem (Theorem \ref{local duality})
that $\mathrm{H}_{\mathfrak m}^i(Z_{d-1}(F_\cdot))=0$ for $i\neq d$.
Since $A$ is AS-Gorenstein,
 $\mathrm{H}_{\mathfrak m}^i(A)=0$ for $i\neq d$.
It follows from lcd$(A)=d$ and the short exact sequence $$ 0
\longrightarrow Z_d(F_\cdot)\longrightarrow F_d\longrightarrow
Z_{d-1}(F_\cdot)\longrightarrow 0 $$ that $\mathrm{H}_{\mathfrak
m}^i(Z_d(F_\cdot))=0$ for $i\neq d$ and the following sequence is
exact:
\begin{equation}\label{1}
0\longrightarrow \mathrm{H}_{\mathfrak
m}^d(Z_d(F_\cdot))\longrightarrow \mathrm{H}_{\mathfrak m}^d(F_d)
\longrightarrow \mathrm{H}_{\mathfrak
m}^d(Z_{d-1}(F_\cdot))\longrightarrow 0.
\end{equation}
Since $A$ is a standard AS-Gorenstein algebra, $\mathrm{CM.reg}\
A=0$ by Lemma \ref{CM reg of AS-Gorenstein algebra}.  Hence
$\mathrm{CM.reg}\ F_d=d$ by Definition \ref{cm}. Therefore
$\mathrm{H}_{\mathfrak m}^d(Z_d(F_\cdot))_{>0}=\mathrm{H}_{\mathfrak
m}^d(F_d)_{>0}=0$ by \eref{1}. This implies that $\mathrm{CM.reg}\
Z_d(F_\cdot)\leq d$, which contradicts to that $\mathrm{Ext.reg}\
Z_d(F_\cdot)=d+1$ by Corollary \ref{reg-equal}. Hence
$Z_d(F_\cdot)=0$, which means that $\mathrm{pd}_A \ k < \infty$.

(1) $\Rightarrow$ (2). Suppose that $A$ is a Koszul AS-regular
$k$-algebra with global dimension $d$. Let $L_\cdot: \,
0\longrightarrow L_d \longrightarrow \cdots \longrightarrow L_0
\longrightarrow k \longrightarrow 0,$ where $L_i=A(-i)^{\beta_i}$,
be a minimal free resolution of ${}_Ak$. Since
$$\mathrm{\underline{Ext}}_A^{i}(k, A)=
\left\{
\begin{array}{ll}
0& i\neq d\\
k(l)& i=d
\end{array}
\right. $$for some $l\in \mathbb Z$, $\mathrm{Hom}_A(L_\cdot, A)$ is
a minimal free resolution of $k_A(l)$. It follows from the Koszulity
of $A$ that $d=l$. Hence $A$ is standard.
\end{proof}

\section{Castelnuovo-Mumford regularity and AS-regular algebras}

If $A$ is a polynomial algebra with standard grading, Eisenbud and
Goto \cite{eg} proved that $\mathrm{CM.reg}\ M = \mathrm{Tor.reg}\
M$ for all non-zero finitely generated graded $A$-modules. Recently,
R\"{o}mer proved that the converse is true \cite[Theorem 4.1]{ro}.
We copy R\"{o}mer's result here for the convenience.

\begin{theorem}\label{tim romer 4.1}
Let $A$ be a commutative Noetherian  connected graded $k$-algebra
generated in degree 1. The following statements are equivalent:

(\rmnum 1) For all $M\in \mathrm{gr}\ A$, $\mathrm{CM.reg}_A
(M)-\mathrm{CM.reg}_A (A)=\mathrm{Ext.reg}_A (M)$;

(\rmnum 2) For all $M\in \mathrm{gr}\ A$, $\mathrm{Ext.reg}_A
(M)=\mathrm{CM.reg}_A (M)+ \mathrm{Ext.reg}_A (k)$;

(\rmnum 3) For all $M\in \mathrm{gr}\ A$, $\mathrm{Ext.reg}_A
(M)=\mathrm{CM.reg}_A (M)$;

(\rmnum 4) $A$ is Koszul and $\mathrm{CM.reg}_A (A)=0$;

(\rmnum 5) $A=k[x_1, \cdots, x_n]$ is a polynomial ring with
standard grading.
\end{theorem}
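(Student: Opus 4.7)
The plan is to establish the chain (v)$\Rightarrow$(iv)$\Rightarrow$(iii), derive (iii)$\Rightarrow$(i),(ii) by direct specialization, invoke Theorem \ref{jo} for the converses (i),(ii)$\Rightarrow$(iii), and reserve the real content for (iv)$\Rightarrow$(v).

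For (v)$\Rightarrow$(iv), the Koszul complex on $x_1,\dots,x_n$ is a linear resolution of $k$ over $S = k[x_1,\dots,x_n]$, giving $\mathrm{Ext.reg}_S\,k = 0$; and the standard computation $\mathrm{H}^i_{\mathfrak m}(S)=0$ for $i<n$ together with $\mathrm{H}^n_{\mathfrak m}(S)$ concentrated in degrees $\le -n$ yields $\mathrm{CM.reg}_S\,S = 0$. For (iv)$\Rightarrow$(iii), Theorem \ref{jo} directly applies: Koszulity gives $\mathrm{Ext.reg}\,k = 0$ and $\mathrm{CM.reg}\,A = 0$ collapses the two-sided inequality to equality. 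Specializing (iii) at $M = k$ and $M = A$, using Examples \ref{example1} and \ref{example2} that $\mathrm{CM.reg}\,k = \mathrm{Ext.reg}\,A = 0$, yields $\mathrm{Ext.reg}\,k = 0$ and $\mathrm{CM.reg}\,A = 0$, so (iii) rewrites as (i) and as (ii). Conversely, the same specializations combined with the non-negativities $\mathrm{Ext.reg}\,k \ge 0$ (since $\mathrm{Ext}^0(k,k) = k$) and $\mathrm{CM.reg}\,A \ge 0$ (standard for commutative connected graded $k$-algebras generated in degree $1$) pin $\mathrm{CM.reg}\,A = 0 = \mathrm{Ext.reg}\,k$ when combined with Theorem \ref{jo}, reducing (i) and (ii) to (iii).

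The substantive implication is (iv)$\Rightarrow$(v). I would write $A = S/I$ as a quotient of the standard-graded polynomial ring $S = k[x_1,\dots,x_n]$ with $n = \dim_k \mathfrak m/\mathfrak m^2$; Koszulity forces $I$ to be generated by quadrics, and the target is $I = 0$. The strategy is to prove $A$ has finite global dimension, so that the classical Auslander-Buchsbaum-Serre theorem (a commutative noetherian connected graded $k$-algebra of finite global dimension generated in degree $1$ is a polynomial ring) closes the argument. For finite global dimension I would use that Koszulity makes each syzygy $\Omega^i k$ in the minimal resolution of ${}_Ak$ linear, so $\mathrm{Ext.reg}\,\Omega^i k = i$; (iii) then gives $\mathrm{CM.reg}\,\Omega^i k = i$. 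Propagating the uniform bound $\mathrm{CM.reg}\,A = 0$ through the short exact sequences $0 \to \Omega^{i+1}k \to F_i \to \Omega^i k \to 0$ should force $\Omega^i k = 0$ once $i$ exceeds $n$, perhaps aided by the Koszul duality identity $H_A(t)H_{A^!}(-t) = 1$ to convert the regularity data into a Hilbert-series equation.

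The main obstacle lies precisely in this final step: turning the clean identity $\mathrm{Ext.reg}\,\Omega^i k = \mathrm{CM.reg}\,\Omega^i k = i$ into a genuine vanishing rather than a bound. The argument leans essentially on the commutative presentation $A = S/I$ and on regular-ring characterizations, neither of which is available non-commutatively, which explains why the main theorem of the paper under review requires a fundamentally different approach.
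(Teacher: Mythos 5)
First, a point of reference: the paper does not prove this theorem at all --- it is quoted verbatim from R\"{o}mer \cite[Theorem 4.1]{ro} "for the convenience" of the reader --- so your attempt can only be measured against R\"{o}mer's argument and against internal consistency. Your reductions among (\rmnum 1), (\rmnum 2), (\rmnum 3), (\rmnum 4) are correct: specializing at $M=k$ and $M=A$ together with $\mathrm{CM.reg}\,k=\mathrm{Ext.reg}\,A=0$, the non-negativity of $\mathrm{Ext.reg}\,k$ and of $\mathrm{CM.reg}\,A$, and Theorem \ref{jo} (which applies since a commutative Noetherian connected graded $k$-algebra automatically has a balanced dualizing complex) do collapse everything onto (\rmnum 3)$\Leftrightarrow$(\rmnum 4). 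The implication (\rmnum 5)$\Rightarrow$(\rmnum 4) is also fine.

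The genuine gap is (\rmnum 4)$\Rightarrow$(\rmnum 5), which you yourself flag as "the main obstacle": the step that is supposed to force $\Omega^i k=0$ for $i\gg 0$ is left at the level of "should force ... perhaps aided by", and that is precisely where all the content of the theorem lives, so as written this is not a proof. Two remarks. First, your syzygy strategy can in fact be completed: from $0\to\Omega^{i+1}k\to F_i\to\Omega^i k\to 0$ and $\mathrm{H}_{\mathfrak m}^{j}(F_i)_{>i-j}=0$ one shows that a nonzero class in $\mathrm{H}_{\mathfrak m}^{j}(\Omega^{i+1}k)$ in the extremal degree $i+1-j$ (which must exist if $\Omega^{i+1}k\neq 0$, since $\mathrm{CM.reg}\,\Omega^{i+1}k=\mathrm{Ext.reg}\,\Omega^{i+1}k=i+1$) lifts to a nonzero extremal class in $\mathrm{H}_{\mathfrak m}^{j-1}(\Omega^{i}k)$; iterating down to cohomological degree $0$ forces $j=i+1$, which is impossible once $i+1>\dim A$ by Grothendieck vanishing. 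This is essentially the technique the paper itself deploys in Theorem \ref{standard Koszul AS-Gorenstein}, but you have to actually run the chase. Second, and more to the point, the standard (R\"{o}mer-style) route is much shorter and makes your detour through global dimension unnecessary: write $A=S/I$ with $S=k[x_1,\dots,x_n]$ standard graded and $I\subseteq\mathfrak m_S^2$; local cohomology is insensitive to whether one works over $A$ or $S$, so Eisenbud--Goto over $S$ gives $\mathrm{Tor.reg}_S(A)=\mathrm{CM.reg}\,A=0$; but $\mathrm{Tor}_1^S(k,A)\cong I/\mathfrak m_S I$ is concentrated in degrees $\geq 2$, so $\mathrm{Tor}_1^S(k,A)_{>1}=0$ forces $I=0$ by graded Nakayama. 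Note that Koszulity is not even needed for this implication --- a fact your write-up obscures --- and that this argument leans on the commutative presentation $A=S/I$ exactly as you predict, which is indeed why the paper's non-commutative Theorem \ref{main theorem} must proceed differently.
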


In this section, we prove a non-commutative version of \cite[Theorem
4.1]{ro}. We start from the following two lemmas.

\begin{lemma}\label{linear free resolution}
Let $X^\cdot \in \mathrm{D^{b}}(\mathrm{Gr}\, A)$ with
$h^i(X^\cdot)_{<-i}=0$ for any $i \in \mathbb Z.$ Then for any $i
\in \mathbb Z,$ $\mathrm{\underline{Ext}}_A^i(X^\cdot,k)_{>-i}=0$ .
\end{lemma}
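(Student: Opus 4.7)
The plan is to reduce to a single-module case by inducting on the amplitude of $X^\cdot$, and in the module case to invoke the classical minimal-free-resolution syzygy bound for a connected graded $k$-algebra.

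For the module case, assume $X^\cdot\simeq M[-m]$ with $M=h^m(X^\cdot)$; the hypothesis becomes $M_{<-m}=0$, so $M$ is minimally generated in degrees $\geq -m$. Take a minimal graded free resolution $\cdots\to F_1\to F_0\to M\to 0$. Minimality gives $\mathrm{Im}(F_j\to F_{j-1})\subseteq \mathfrak m F_{j-1}$, so inductively each $F_j$ is a (possibly infinite) direct sum of shifts $A(-d)$ with $d\geq -m+j$. Since $\mathfrak m$ annihilates $k$, the differentials of $\mathrm{\underline{Hom}}_A(F_\bullet,k)$ all vanish, and hence $\mathrm{\underline{Ext}}_A^j(M,k)=\mathrm{\underline{Hom}}_A(F_j,k)$ lives in degrees $\leq m-j$. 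Translating through the shift, $\mathrm{\underline{Ext}}_A^i(X^\cdot,k)\cong\mathrm{\underline{Ext}}_A^{i+m}(M,k)$, which is concentrated in degrees $\leq m-(i+m)=-i$, as required.

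For general bounded $X^\cdot$ I would induct on $\mathrm{amp}\,X^\cdot$. The base case $\mathrm{amp}\,X^\cdot=0$ is the module case above. Otherwise set $m=\sup X^\cdot$ and use the canonical truncation distinguished triangle
$$\tau_{\leq m-1}X^\cdot\longrightarrow X^\cdot\longrightarrow h^m(X^\cdot)[-m]\longrightarrow \tau_{\leq m-1}X^\cdot[1].$$
The two outer terms each satisfy the hypothesis of the lemma: $\tau_{\leq m-1}X^\cdot$ has the same cohomology as $X^\cdot$ in degrees $\leq m-1$ and zero above, while $h^m(X^\cdot)[-m]$ is a one-term complex with $h^m(X^\cdot)_{<-m}=0$. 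Both have strictly smaller amplitude than $X^\cdot$ (or are covered by the module case), so by induction $\mathrm{\underline{Ext}}^i(-,k)_{>-i}=0$ for each of them. Applying $R\mathrm{\underline{Hom}}_A(-,k)$ to the triangle and reading off the long exact sequence sandwiches $\mathrm{\underline{Ext}}_A^i(X^\cdot,k)$ between them, giving $\mathrm{\underline{Ext}}_A^i(X^\cdot,k)_{>-i}=0$.

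The only point requiring care is the degree-shift in the module case, namely the inductive statement that minimality forces each $F_j$ to acquire one extra degree of shift per homological step. This is a formal consequence of connected grading (the kernel of a minimal cover sits in $\mathfrak m F_{j-1}$, hence in degrees $\geq -m+j$), but should be spelled out explicitly. The remaining pieces—inheritance of the hypothesis by the canonical truncations and the long exact Ext sequence—are routine.
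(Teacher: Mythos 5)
Your proof is correct and follows essentially the same strategy as the paper's: induction on the amplitude, with the base case handled by the degree bounds on a minimal graded free resolution of a module generated in degrees $\geq -m$, and the inductive step handled by a truncation triangle plus the long exact Ext sequence. The only cosmetic difference is that you split off the top cohomology via $\tau_{\leq m-1}X^\cdot \to X^\cdot \to h^m(X^\cdot)[-m]$ at $m=\sup X^\cdot$, while the paper splits off the bottom cohomology $\mathrm{Ker}(d_{X^\cdot}^{s})[-s]$ at $s=\inf X^\cdot$; both work identically.
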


\begin{proof}[Proof]
If $X^\cdot \cong 0 \in \mathrm{D^{b}}(\mathrm{Gr}\ A)$, there is
nothing to prove. So we assume $X^\cdot \ncong 0.$  We prove the
assertion by induction on the amplitude amp\,$X^\cdot = \sup X^\cdot
- \inf X^\cdot$.

If amp\,$X^\cdot = 0,$ then $h^s(X^\cdot)\neq 0$ and  $X^\cdot\cong
h^s(X^\cdot)[-s] \in \mathrm{D^{b}}(\mathrm{Gr}\,A)$ for $s= \inf
X^\cdot$. Since $h^s(X^\cdot)_{<-s}=0$, $h^s(X^\cdot)[-s]$ has a
minimal free resolution $F^\cdot$, where $(F^i)_{<-i}=0$ for $i \leq
s$ and $F^i=0$ for $i>s$.
Then
$\mathrm{\underline{Ext}}_A^{-i}(X^\cdot,k)_{>i}=\mathrm{\underline{Hom}}_A(F^{i},
k)_{>i}=0$ for $i\leq s$ and
$\mathrm{\underline{Ext}}_A^{-i}(X^\cdot,k)=0$ for $i>s$. Thus the
assertion holds for amp\,$X^\cdot = 0$.

If amp\,$X^\cdot = n>0$, we may assume that $X^j=0$ for either $j<s$
or $j>s+n$, where $s = \inf X^\cdot$. Consider the following exact
triangle in $\mathrm{D^b}(\mathrm{Gr}\,A)$
$$\mathrm{Ker}(d_{X^\cdot}^s)[-s]\stackrel{\lambda}{\longrightarrow}X^\cdot \longrightarrow
\mathrm{cone}(\lambda)\longrightarrow
\mathrm{Ker}(d_{X^\cdot}^s)[-s+1].$$  and the induced long exact
sequence
$$\cdots\longrightarrow \mathrm{\underline{Ext}}_A^i(\mathrm{cone}(\lambda),k)\longrightarrow
\mathrm{\underline{Ext}}_A^i(X^\cdot,k)\longrightarrow
\mathrm{\underline{Ext}}_A^i(\mathrm{Ker}(d_{X^\cdot}^s)[-s],k)\longrightarrow
\cdots.$$ Since amp\,$(\mathrm{Ker}(d_{X^\cdot}^s)[-s])=0$ and
amp\,$(\mathrm{cone}(\lambda))<n$, by induction, it is easy to see
that $\mathrm{\underline{Ext}}_A^i(X^\cdot,k)_{>-i}=0$ for any $i\in
\mathbb Z$.
\end{proof}


Let $A$ be a ring, and $f: F^\cdot \longrightarrow R^\cdot$ be a
morphism of bounded below $A$-module complexes. Set $s=\inf
R^\cdot$. Then $f$ naturally induces a morphism between $F^{\geq s}$
and $R^\cdot$, denoted by $\tilde f$. Define a morphism $g$ between
$F^{\leq s-1}$ and $\mathrm{cone}(\tilde f)$ as follows:
\begin{displaymath}
\xymatrix{
   F^{\leq s-1}\ar[d]_{g}& \cdots \ar[r]^{} & F^{s-2} \ar[d]_{} \ar[r]^{} & F^{s-1} \ar[d]_{} \ar[r]^{} & 0 \ar[d]^{} \ar[r]^{}& \cdots\\
   \mathrm{cone}(\tilde f)& \cdots \ar[r]^{} & R^{s-2} \ar[r]^{} & F^s\oplus R^{s-1} \ar[r]^{} &  F^{s+1}\oplus R^{s} \ar[r]^{} & \cdots}
\end{displaymath}
where
\begin{displaymath}
g^j = \left\{
\begin{array}{ll}
f^j & \quad j\leq s-2\\
0 & \quad j\geq s\\
 \left(
\begin{array}{c}
d_{F^\cdot}^{s-1} \\
f^{s-1}
\end{array}
\right)& \quad j=s-1.
\end{array}
\right.
\end{displaymath}

Then $g: F^{\leq s-1} \longrightarrow  \mathrm{cone}(\tilde f)$ is
indeed a morphism of complexes.

\begin{lemma}\label{quasi-isomorphism} Let the notations be as above.
If $f: F^\cdot \longrightarrow R^\cdot$ is a quasi-isomorphism, then
$g$ is a quasi-isomorphism.
\end{lemma}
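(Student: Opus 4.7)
The plan is to show $g$ is a quasi-isomorphism by computing $h^i$ of both sides in every degree and checking that $g$ induces the correct map. The key input is that $f$ is a quasi-isomorphism together with $\inf R^\cdot=s$; the latter gives $h^i(R^\cdot)=0$ for $i<s$, and the former then forces $h^i(F^\cdot)=0$ for $i<s$ as well.

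First I would compute $h^i(F^{\leq s-1})$ directly from the brutal truncation. For $i\leq s-2$ this coincides with $h^i(F^\cdot)=0$, and for $i\geq s$ the truncation is zero. In the remaining degree $i=s-1$ the outgoing differential at $F^{s-1}$ has been killed, so $h^{s-1}(F^{\leq s-1})=F^{s-1}/B^{s-1}(F^\cdot)$; since $h^{s-1}(F^\cdot)=0$ forces $Z^{s-1}(F^\cdot)=B^{s-1}(F^\cdot)$, the map $x\mapsto d^{s-1}(x)$ induces a canonical isomorphism $h^{s-1}(F^{\leq s-1})\xrightarrow{\sim} d^{s-1}(F^{s-1})=B^s(F^\cdot)$.

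Next I would compute $h^i(\mathrm{cone}(\tilde f))$ from the distinguished triangle
$$F^{\geq s}\xrightarrow{\tilde f} R^\cdot\longrightarrow \mathrm{cone}(\tilde f)\longrightarrow F^{\geq s}[1].$$
One has $h^i(F^{\geq s})=h^i(F^\cdot)$ for $i>s$, $h^s(F^{\geq s})=Z^s(F^\cdot)$, and $h^i(F^{\geq s})=0$ for $i<s$. Because $f$ is a quasi-isomorphism, $h^i(\tilde f)$ is an isomorphism for $i>s$, and at $i=s$ the map $Z^s(F^\cdot)\to h^s(R^\cdot)$ factors as the canonical surjection onto $h^s(F^\cdot)$ followed by the isomorphism $h^s(f)$, hence is surjective with kernel $B^s(F^\cdot)$. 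Feeding this into the long exact sequence collapses everything and yields $h^i(\mathrm{cone}(\tilde f))=0$ for $i\neq s-1$, together with a canonical identification $h^{s-1}(\mathrm{cone}(\tilde f))\cong B^s(F^\cdot)$ given on representatives by $[(a,b)]\mapsto a$.

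It remains to verify that $g$ realizes these identifications. Every $x\in F^{s-1}$ is a cycle in $F^{\leq s-1}$, and $g^{s-1}(x)=(d^{s-1}(x),f^{s-1}(x))$; tracing this through the two computations above sends $[x]\in F^{s-1}/Z^{s-1}(F^\cdot)$ to $d^{s-1}(x)\in B^s(F^\cdot)$, which is precisely the isomorphism from the first step. The only delicate point will be the sign-convention bookkeeping for the cone differential and the connecting homomorphism; once those are pinned down the rest is a routine verification.
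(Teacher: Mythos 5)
Your proposal is correct and follows essentially the same route as the paper: both arguments observe that all cohomology is concentrated in degree $s-1$ and then verify that $h^{s-1}(g)$ is an isomorphism, the paper by a direct injectivity/surjectivity element chase and you by identifying both $h^{s-1}(F^{\leq s-1})$ and $h^{s-1}(\mathrm{cone}(\tilde f))$ canonically with $B^s(F^\cdot)$ (the latter via the connecting map of the triangle) and checking that $g$ intertwines the two identifications. The sign issues you flag are harmless, since they at most change the identifications by a sign and do not affect the isomorphism conclusion.
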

\begin{proof}[Proof]
 It is easy to see that
$h^j(F^{\leq s-1}) = 0 = h^j(\mathrm{cone}(\tilde f))$ for all
$j\neq s-1$, since $f$ is a quasi-isomorphism. We are left to prove
that $h^{s-1}(g): h^{s-1}(F^{\leq s-1})\longrightarrow
h^{s-1}(\mathrm{cone}(\tilde f))$ is an isomorphism.

For any  $x^{s-1}+\mathrm{Im}(d_{F^\cdot}^{s-2})\in
\mathrm{Ker}(h^{s-1}(g))$, there exists $y^{s-2}\in R^{s-2}$ such
that
$(d_{F^\cdot}^{s-1}(x^{s-1}),f^{s-1}(x^{s-1}))=(0,d_{R^\cdot}^{s-2}(y^{s-2}))$.
Hence $x^{s-1}\in
\mathrm{Ker}(d_{F^\cdot}^{s-1})=\mathrm{Im}(d_{F^\cdot}^{s-2})$. It
follows that $h^{s-1}(g)$ is injective.

On the other hand, suppose $(x^s,y^{s-1})\in
\mathrm{Ker}(d_{\mathrm{cone}(\tilde f)}^{s-1})$, that is
$(-d_{F^\cdot}^{s}(x^s), -f^s(x^s)+d_{R^\cdot}^{s-1}(y^{s-1}))=0$.
Since $f$ is a quasi-isomorphism, there exists $x^{s-1}\in F^{s-1}$
such that $d_{F^\cdot}^{s-1}(x^{s-1})=x^s$. Then
\begin{eqnarray*}
(x^s,y^{s-1})-g^{s-1}(x^{s-1})&=&(x^s,y^{s-1})-(d_{F^\cdot}^{s-1}(x^{s-1}),
f^{s-1}(x^{s-1}))\\&=& (0, y^{s-1}-f^{s-1}(x^{s-1})).
\end{eqnarray*}
Since
$d_{R^\cdot}^{s-1}(y^{s-1}-f^{s-1}(x^{s-1}))=f^s(x^s)-d_{R^\cdot}^{s-1}f^{s-1}(x^{s-1})=0$,
there exists $x^{s-2}\in R^{s-2}$ such that
$d_{R^\cdot}^{s-2}(x^{s-2})=y^{s-1}-f^{s-1}(x^{s-1})$. Thus
$$h^{s-1}(g)
(x^{s-1}+\mathrm{Im}(d_{F^\cdot}^{s-2}))=(x^s,y^{s-1})+\mathrm{Im}(d_{\mathrm{cone}(\tilde
f)}^{s-2}).$$ It follows that $h^{s-1}(g)$ is surjective.
\end{proof}

The following is the main result in this article, which is a
generalization of (\rmnum 3)$\Leftrightarrow$(\rmnum
4)$\Leftrightarrow$(\rmnum 5) in \cite[Theorem 4.1]{ro} (see also
Theorem \ref{tim romer 4.1}) to the non-commutative case.
\begin{theorem}\label{main theorem}
Let $A$ be a Noetherian  connected graded $k$-algebra with a
balanced dualizing complex. Then the following are equivalent:

(1) $\mathrm{CM.reg}\ M=\mathrm{Ext.reg}\ M$ holds for all $M\in
\mathrm{gr}\,A$.

(2) $A$ is Koszul and $\mathrm{CM.reg}\ A = 0$.

(3) $A$ is a Koszul AS-regular $k$-algebra.
\end{theorem}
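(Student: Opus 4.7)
The equivalence $(1) \Leftrightarrow (2)$ and the implication $(3) \Rightarrow (2)$ follow directly from results already in the paper. For $(1) \Rightarrow (2)$, I would specialize to $M = k$ (Example~\ref{example1} gives $\mathrm{CM.reg}\ k = 0$, so $\mathrm{Ext.reg}\ k = 0$, i.e.\ $A$ is Koszul) and to $M = A$ (Example~\ref{example2} gives $\mathrm{Ext.reg}\ A = 0$, hence $\mathrm{CM.reg}\ A = 0$). For $(2) \Rightarrow (1)$, the double inequality of Theorem~\ref{jo} collapses when $\mathrm{CM.reg}\ A = 0$ and $\mathrm{Ext.reg}\ k = 0$. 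And $(3) \Rightarrow (2)$: by Theorem~\ref{standard Koszul AS-Gorenstein} a Koszul AS-regular algebra is standard AS-Gorenstein ($d = l$), so Lemma~\ref{CM reg of AS-Gorenstein algebra} gives $\mathrm{CM.reg}\ A = 0$.

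The real content lies in $(2) \Rightarrow (3)$. My plan is to prove $\mathrm{pd}_A\,k < \infty$; once achieved, $\mathrm{id}_A A < \infty$, so Theorem~\ref{Mori's Theorem} forces $A$ to be AS-Gorenstein, and combined with finite global dimension this is the definition of AS-regular. Assuming $A$ is Koszul with $\mathrm{CM.reg}\ A = 0$, the direction $(2)\Rightarrow(1)$ just proved lets me use $\mathrm{CM.reg}\ M = \mathrm{Ext.reg}\ M$ for every $0 \ncong M \in \mathrm{gr}\,A$. I suppose for contradiction that $\mathrm{pd}_A\,k = \infty$ (the case $A = k$ is trivially AS-regular). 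Taking the minimal free resolution $\cdots \to F_i \to \cdots \to F_0 \to k \to 0$, Koszulity forces $F_i = A(-i)^{\beta_i}$ with every $\beta_i > 0$; each syzygy $Z_i$ is then non-zero, torsion-free, generated in degree $i+1$, and because its tail is a minimal linear free resolution, $\mathrm{Ext.reg}\,Z_i = i+1$, and hence $\mathrm{CM.reg}\,Z_i = i+1$.

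I will then set $d = \mathrm{cd}(\Gamma_{\mathfrak m}) < \infty$ and fix $i > d$. Since $\mathrm{H}^0_{\mathfrak m}(Z_i) = 0$ and $\mathrm{H}^j_{\mathfrak m}(Z_i) = 0$ for $j > d$, the value $\mathrm{CM.reg}\,Z_i = i+1$ must be attained at some $1 \leq j_i \leq d$, i.e.\ $\mathrm{H}^{j_i}_{\mathfrak m}(Z_i)_{i+1-j_i} \neq 0$. The key claim, proved by induction on $k \in \{0, 1, \ldots, j_i\}$, is that
$$\mathrm{H}^{j_i-k}_{\mathfrak m}(Z_{i-k})_{i+1-j_i} \neq 0.$$
The base case is the witness just stated. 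For the inductive step, the long exact sequence of $0 \to Z_{i-k} \to F_{i-k} \to Z_{i-k-1} \to 0$ at degree $n = i+1-j_i$ contains
$$\mathrm{H}^{j_i-k-1}_{\mathfrak m}(Z_{i-k-1})_n \,\longrightarrow\, \mathrm{H}^{j_i-k}_{\mathfrak m}(Z_{i-k})_n \,\longrightarrow\, \mathrm{H}^{j_i-k}_{\mathfrak m}(F_{i-k})_n,$$
and the rightmost term equals $\mathrm{H}^{j_i-k}_{\mathfrak m}(A)_{1-(j_i-k)}$, which vanishes because $\mathrm{CM.reg}\ A = 0$ forces $\mathrm{H}^{j_i-k}_{\mathfrak m}(A)_{>-(j_i-k)} = 0$. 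Surjectivity of the left arrow then lifts the middle's non-vanishing to the left term, completing the step. At $k = j_i$ the claim reads $\mathrm{H}^0_{\mathfrak m}(Z_{i-j_i})_{n} \neq 0$, contradicting the torsion-freeness of $Z_{i-j_i} \subseteq F_{i-j_i}$.

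Hence $\mathrm{pd}_A\,k < \infty$, and the argument concludes as described. The principal obstacle is the iteration step: the hypothesis $\mathrm{CM.reg}\ A = 0$ is precisely the sharp condition needed to kill the middle term $\mathrm{H}^{j_i-k}_{\mathfrak m}(F_{i-k})$ at the diagonal degree $n = i+1-j_i$, so that a top-degree local cohomology class on $Z_i$ can cascade unchanged through $j_i$ layers of syzygies until it reaches homological level zero, where torsion-freeness rules it out. Getting the indices to line up along this exact diagonal, and confirming the vanishing at each rung, is where the real care is needed.
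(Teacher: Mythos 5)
Your proposal is correct, and for the one substantive implication it takes a genuinely different route from the paper. The easy implications ($(1)\Leftrightarrow(2)$ via Examples \ref{example1}, \ref{example2} and Theorem \ref{jo}, and $(3)\Rightarrow(2)$ via Theorem \ref{standard Koszul AS-Gorenstein} and Lemma \ref{CM reg of AS-Gorenstein algebra}) coincide with the paper's. For the hard direction the paper proves $(1)\Rightarrow(3)$ by working with the balanced dualizing complex $R^\cdot\cong R\mathrm{\Gamma}_{\mathfrak m}(A)'$: it shows $\mathrm{Ext.reg}\ R^\cdot=\mathrm{CM.reg}\ R^\cdot=0$, takes a minimal free resolution $F^\cdot$ of $R^\cdot$, and uses the truncation/cone machinery of Lemmas \ref{linear free resolution} and \ref{quasi-isomorphism} to force $F^{\le s-1}$ to be acyclic, i.e.\ $\mathrm{pd}_A R^\cdot<\infty$, then invokes $(3)\Rightarrow(1)$ of Theorem \ref{Mori's Theorem}. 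You instead prove $(2)\Rightarrow(3)$ by showing $\mathrm{pd}_A k<\infty$ directly: a descending induction pushes a top-degree local cohomology class of a syzygy $Z_i$ (for $i>\mathrm{cd}(\Gamma_{\mathfrak m})$, which is finite by Theorem \ref{vdb}) down through the linear syzygies until it lands in $\mathrm{H}^0_{\mathfrak m}$ of a torsion-free module, and then you enter Theorem \ref{Mori's Theorem} through condition (2) rather than (3). Your cascade is a refinement of the paper's own syzygy argument in Theorem \ref{standard Koszul AS-Gorenstein}, made to work without assuming $\mathrm{H}^j_{\mathfrak m}(A)$ is concentrated in one degree; it buys a more elementary proof that avoids Lemmas \ref{linear free resolution} and \ref{quasi-isomorphism} and never manipulates $R^\cdot$ itself, at the cost of the index bookkeeping along the diagonal $n=i+1-j_i$. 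All the steps check out: $\mathrm{H}^{j_i-k}_{\mathfrak m}(F_{i-k})_n$ is a sum of copies of $\mathrm{H}^{j_i-k}_{\mathfrak m}(A)_{1-(j_i-k)}$, which vanishes since $1-(j_i-k)>-(j_i-k)$. Two small points you should make explicit: the torsion-freeness of the syzygies requires $\Gamma_{\mathfrak m}(A)=0$, which follows from $\mathrm{CM.reg}\ A=0$ only after excluding the finite-dimensional case $A=k$ (since $\Gamma_{\mathfrak m}(A)$ is then an ideal contained in $A_0$); and the existence of the witness $j_i$ uses that $\mathrm{CM.reg}\ Z_i$ is a finite, attained minimum, which holds because $A$ has a balanced dualizing complex.
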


\begin{proof}[Proof]

(3) $\Rightarrow$ (2). By Theorem \ref{standard Koszul
AS-Gorenstein},  $A$ is standard. It follows from lemma \ref{CM reg
of AS-Gorenstein algebra}  that $\mathrm{CM.reg}\ A = 0$.

(2) $\Rightarrow$ (1). Direct from Theorem \ref{jo}.


(1) $\Rightarrow$ (3). Let $R^\cdot$ be the balanced dualizing
complex over $A$. Then $R^\cdot\cong R\mathrm{\Gamma}_{\mathfrak
m}(A)^\prime$ by \cite[Theorem 6.3]{vdb}. By assumption,
$\mathrm{CM.reg}\ A=\mathrm{Ext.reg}\ A=0$, so
$\mathrm{H}_{\mathfrak m}^i(A)_{>-i}=0$ for any  $i \in \mathbb Z$
by Definition \ref{cm}.  Thus $h^i(R^\cdot)_{<-i}=0$ for any $i\in
\mathbb Z$. By Lemma \ref{linear free resolution},
$\mathrm{\underline{Ext}}_A^i(R^\cdot,k)_{>-i}=0$ for any $i\in
\mathbb Z$.

On the other hand, since $R^\cdot$ is a balanced dualizing complex
over $A$, $R\mathrm{\Gamma}_{\mathfrak m}(R^\cdot)\cong A^\prime$ by
Definition \ref{balanced dualizing complex}. Therefore,
$\mathrm{CM.reg}\ R^\cdot=0$ by Definition \ref{cm}. Again by
assumption, $\mathrm{Ext.reg}\ k=\mathrm{CM.reg}\ k=0.$ It follows
from Theorem \ref{jo} that $\mathrm{Ext.reg}\
R^\cdot=\mathrm{CM.reg}\ R^\cdot=0$. Thus by Definition \ref{ext},
$\mathrm{\underline{Ext}}_A^i(R^\cdot,k)_{<-i}=0$ for any $i\in
\mathbb Z$.

Since $R^\cdot \in \mathrm{D_{fg}^b}(\mathrm{Gr}\,A)$, $R^\cdot$ has
a finitely generated minimal free resolution
$F^\cdot\stackrel{\simeq}{\longrightarrow} R^\cdot$.
Set $s= \inf\{j\ |\ h^j(R^\cdot)\neq 0\}$. Then $s=-\sup\{j\ |\
\mathrm{H}_{\mathfrak m}^j(A)\neq 0\}$. Since
 both $F^{\geq s}$ and $F^{\leq s-1}$ are minimal
free complexes, and $\mathrm{\underline{Ext}}_A^i(R^\cdot,k)_{\neq
-i}=0$ for any  $i \in \mathbb Z$,
\begin{equation}\label{3}
\mathrm{Ext.reg}\ F^{\geq s}= \left\{
\begin{array}{ll}
0 & F^{\geq s}~~\mbox{is not acyclic}\\
-\infty & F^{\geq s}~~\mbox{is acyclic}
\end{array}
\right.
\end{equation}
and
\begin{equation}\label{4}
\mathrm{Ext.reg}\ F^{\leq s-1}= \left\{
\begin{array}{ll}
0 & F^{\leq s-1}~~\mbox{is not acyclic}\\
-\infty & F^{\leq s-1}~~\mbox{is acyclic}.
\end{array}
\right.
\end{equation}
By the choice of $s$, $F^s \neq 0$ and thus
$\mathrm{\underline{Ext}}_A^{-s}(F^{\geq s},
k)=\mathrm{\underline{Hom}}_A(F^s, k) \neq 0$. Hence $F^{\geq s}$ is
not acyclic and $\mathrm{Ext.reg}\ F^{\geq s}=0$ by \eref{3}.  Let
$f$ be the quasi-isomorphism between $F^\cdot$ and $R^\cdot$. Then
$f$ induces naturally a morphism between $F^{\geq s}$ and $R^\cdot$,
denoted by $\tilde f$.

We claim that $\tilde f$ is a quasi-isomorphism. If $\tilde f$ is
not a quasi-isomorphism, then $\mathrm{cone}(\tilde f)\ncong 0$ in
$\mathrm{D^{b}}(\mathrm{Gr}\,A)$. Consider the following exact
triangle
$$F^{\geq s}\stackrel{\tilde f}{\longrightarrow}R^\cdot \longrightarrow \mathrm{cone}(\tilde f)\longrightarrow F^{\geq s}[1].$$
Since $R\mathrm{\Gamma}_{\mathfrak m}(R^\cdot)\cong A^\prime$ in
$\mathrm{D}(\mathrm{Gr}\,A^e)$, we have the following exact
sequences
\begin{equation}\label{5}
\left\{
\begin{array}{l}
0\rightarrow  \mathrm{H}_{\mathfrak m}^{-1}(\mathrm{cone}(\tilde f))
\rightarrow \mathrm{H}_{\mathfrak m}^0(F^{\geq s}) \rightarrow
\mathrm{H}_{\mathfrak m}^0(R^\cdot) \rightarrow
\mathrm{H}_{\mathfrak m}^{0}(\mathrm{cone}(\tilde f))
\rightarrow \mathrm{H}_{\mathfrak m}^1(F^{\geq s})\rightarrow 0,\\
\mathrm{H}_{\mathfrak m}^{j-1}(\mathrm{cone}(\tilde f))\cong
\mathrm{H}_{\mathfrak m}^{j}(F^{\geq s}),\quad j\neq 0, 1.
\end{array}
\right.
\end{equation}
It follows from the proof of Lemma \ref{quasi-isomorphism} that
$\mathrm{cone}(\tilde f) \cong h^{s-1}(\mathrm{cone}(\tilde
f))[1-s]$ in $\mathrm{D^{b}}(\mathrm{Gr}\,A)$. By the local duality
theorem, we have the following isomorphisms
\begin{eqnarray*}
R\mathrm{\Gamma}_{\mathfrak m}(\mathrm{cone}(\tilde f))^\prime
& \cong & R\mathrm{\underline{Hom}}_A(\mathrm{cone}(\tilde f), R^\cdot)\\
& \cong & R\mathrm{\underline{Hom}}_A(h^{s-1}(\mathrm{cone}(\tilde f))[1-s], R^\cdot)\\
& \cong & R\mathrm{\underline{Hom}}_A(h^{s-1}(\mathrm{cone}(\tilde
f)), R^\cdot)[s-1].
\end{eqnarray*}
By taking the $0^{th}$ cohomology modules, we have the following
isomorphism,
$$\mathrm{H}_{\mathfrak m}^0(\mathrm{cone}(\tilde f))^\prime\cong \mathrm{\underline{Ext}}_A^{s-1}(h^{s-1}(\mathrm{cone}(\tilde f)), R^\cdot).$$
Since $h^j(R^\cdot)=0$ for $j<s$, we have $\mathrm{H}_{\mathfrak
m}^0(\mathrm{cone}(\tilde
f))=\mathrm{\underline{Ext}}_A^{s-1}(h^{s-1}(\mathrm{cone}(\tilde
f)), R^\cdot)=0$. Since $\mathrm{CM.reg}\ F^{\geq
s}=\mathrm{Ext.reg}\ F^{\geq s}=0$, $\mathrm{H}_{\mathfrak
m}^{j-1}(\mathrm{cone}(\tilde f))_{>-j}$ $=0$ for any $j\neq 1$ by
\eref{5}. Thus $\mathrm{CM.reg}\ (\mathrm{cone}(\tilde f))\leq -1$
by Definition \ref{cm}. However, by Lemma \ref{quasi-isomorphism},
$F^{\leq s-1}$ is a minimal free resolution for
$\mathrm{cone}(\tilde f)$. Hence $\mathrm{Ext.reg}\
(\mathrm{cone}(\tilde f))=0$ by \eref{4}, which contradicts to
$\mathrm{CM.reg}\ (\mathrm{cone}(\tilde f))\leq -1$. Thus we have
proved that $\mathrm{cone}(\tilde f)$ is acyclic.

Since $F^{\leq s-1}$ is a minimal free resolution for
$\mathrm{cone}(\tilde f)$, then $F^j=0$ for $j\leq s-1$, which means
that the projective dimension of $R^\cdot$ is finite. By Theorem
\ref{Mori's Theorem}, $A$ is AS-Gorenstein. It follows from Lemma
\ref{CM reg of AS-Gorenstein algebra} and Theorem \ref{standard
Koszul AS-Gorenstein} that $A$ is AS-regular.
\end{proof}

\begin{remark}\label{koszul AS-regular algebras}
It is easy to show that (\rmnum 1)$\Leftrightarrow$(\rmnum 2) in
Theorem \ref{tim romer 4.1} always holds, even in non-commutative
case. By Proposition \ref {tim romer theorem 4.2}, $\mathrm{CM.reg}\
M-\mathrm{CM.reg}\ A=\mathrm{Ext.reg}\ M$ holds for all finitely
generated module $M$ with pd$_A M < \infty.$ Since there are lots of
non-Koszul AS-regular algebras generated in degree $1$ (e.g., see
\cite{as}), (\rmnum 1)$\Leftrightarrow$(\rmnum 3) does not hold in
general.
\end{remark}

\begin{proposition}\label{tim romer theorem 4.2}
Let $A$  be a Noetherian  connected graded $k$-algebra with a
balanced dualizing complex. If $M\in \mathrm{gr}\ A$ with
$\mathrm{pd}_A M < \infty,$ then
$$\mathrm{CM.reg}\ M-\mathrm{CM.reg}\ A=\mathrm{Ext.reg}\ M.$$
\end{proposition}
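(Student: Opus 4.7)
The plan is to proceed by induction on $n = \mathrm{pd}_A M$. In the base case $n = 0$, $M$ is a finite direct sum $\bigoplus A(-a_i)^{n_i}$ of shifted free modules, and the identities $\mathrm{CM.reg}\ A(-a) = a + \mathrm{CM.reg}\ A$ and $\mathrm{Ext.reg}\ A(-a) = a$ (which follow from Example \ref{example1}, Example \ref{example2} and the shift behaviour of both regularities) imply that both sides of the desired equation equal $\max_i a_i$.

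For the inductive step, I would take the short exact sequence $0 \to N \to F_0 \to M \to 0$ coming from the first step $F_0 \to M$ of the minimal free resolution, so that $\mathrm{pd}_A N = n-1$ and the inductive hypothesis applies to $N$ and $F_0$. Setting $a_0 := \mathrm{Ext.reg}\ F_0 = \max_j a_{0,j}$, a direct comparison of minimal free resolutions yields $\mathrm{Ext.reg}\ M = \max\{a_0,\ \mathrm{Ext.reg}\ N - 1\}$. The upper bound $\mathrm{CM.reg}\ M \leq \mathrm{CM.reg}\ A + \mathrm{Ext.reg}\ M$ is immediate from Theorem \ref{jo}. For the lower bound, the long exact sequence of local cohomology associated to the SES gives the standard ``triangle inequalities''
$$\mathrm{CM.reg}\ N \leq \max\{\mathrm{CM.reg}\ M + 1,\ \mathrm{CM.reg}\ F_0\} \quad\text{and}\quad \mathrm{CM.reg}\ F_0 \leq \max\{\mathrm{CM.reg}\ N,\ \mathrm{CM.reg}\ M\}.$$
Combining these with the inductive hypothesis handles the two easy cases: if $\mathrm{Ext.reg}\ N > a_0$, the first inequality forces $\mathrm{CM.reg}\ M \geq \mathrm{CM.reg}\ N - 1 = \mathrm{CM.reg}\ A + \mathrm{Ext.reg}\ M$; and if $\mathrm{Ext.reg}\ N < a_0$, the second forces $\mathrm{CM.reg}\ M \geq \mathrm{CM.reg}\ F_0 = \mathrm{CM.reg}\ A + \mathrm{Ext.reg}\ M$.

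The main obstacle is the borderline equality $\mathrm{Ext.reg}\ N = a_0$, in which both triangle inequalities become vacuous. I plan to resolve this case by directly exploiting minimality of the resolution. The equality $\mathrm{Ext.reg}\ N = a_0$ forces $\max_j a_{1, j} \leq a_0$, and since each nonzero matrix entry of $d_1 : F_1 \to F_0$ lies in $\mathfrak{m}$ and therefore has internal degree $\geq 1$, the component of any $d_1(e_{1, j})$ along a max-degree generator $e_{0, j^\ast}$ of $F_0$ would be forced to live in degree $a_{1, j} - a_0 \leq 0$, which is impossible. Thus the max-degree generators of $F_0$ carry no relations in $M$, yielding a free direct summand $A(-a_0) \hookrightarrow M$; this immediately produces $\mathrm{CM.reg}\ M \geq \mathrm{CM.reg}\ A(-a_0) = \mathrm{CM.reg}\ A + a_0 = \mathrm{CM.reg}\ A + \mathrm{Ext.reg}\ M$, closing the last case and completing the induction.
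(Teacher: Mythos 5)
Your proof is correct and follows essentially the route the paper intends: the paper's ``proof'' is a one-line deferral to R\"{o}mer's commutative argument \cite[Theorem 4.2]{ro}, which is exactly this induction on $\mathrm{pd}_A M$ via the syzygy sequence $0\to N\to F_0\to M\to 0$, the identity $\mathrm{Ext.reg}\ M=\max\{a_0,\ \mathrm{Ext.reg}\ N-1\}$ from minimality, the two long-exact-sequence inequalities for local cohomology, and the upper bound $\mathrm{CM.reg}\ M\leq \mathrm{CM.reg}\ A+\mathrm{Ext.reg}\ M$ from Theorem \ref{jo}. Your explicit treatment of the borderline case $\mathrm{Ext.reg}\ N=a_0$ --- using minimality of $d_1$ to show the top-degree generators of $F_0$ split off a free summand $A(-a_0)^m$ of $M$, whence $\mathrm{CM.reg}\ M\geq \mathrm{CM.reg}\ A+a_0$ --- is precisely the point one must verify survives the passage to the non-commutative setting (it does: the relevant matrix entries are homogeneous of degree $a_{1,j}-a_0\leq 0$ yet must lie in $\mathfrak m$, hence vanish), so you have in effect supplied the details the paper leaves to the reference.
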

\begin{proof}[Proof] The proof in the commutative case (\cite[Theorem
4.2]{ro}) works well in our non-commutative case.
\end{proof}

\section*{Acknowledgments}

This research is supported by the NSFC (key project 10731070) and
supported by the Doctorate Foundation (No. 20060246003), Ministry of
Education of China.

\bibliography{}

\begin{thebibliography}{99}

\bibitem[AE]{ae}L. L. Avramov and D. Eisenbud, Regularity of modules over a Koszul algebra,
                J. Algebra $\mathbf{153}$ (1992), 85-90.
\bibitem[AP]{ap}L. L. Avramov and I. Peeva, Finite regularity and Koszul
                algebras, Amer. J. Math. $\mathbf{123}$ (2001), 275-281.
\bibitem[AS]{as}M. Artin and W. F. Schelter, Graded Algebras of Global Dimension 3,
                Adv. in Math. $\mathbf{66}$ (1987), 171-216.
\bibitem[AZ]{az}M. Artin and J. J. Zhang, Noncommutative projective schemes,
                 Adv. Math. $\mathbf{109}$ (1994), 228-287.
\bibitem[BH]{bh}W. Bruns and J. Herzog, \emph{Cohen-Macaulay rings}, Rev. ed.
                 Cambridge Studies in Advanced Mathematics $\mathbf{39}$, Cambridge University Press (1998).
\bibitem[EG]{eg}D. Eisenbud and S. Goto, Linear free resolutions and minimal multiplicity,
                 J. Algebra $\mathbf{88}$ (1984), 89-133.
\bibitem[Jo1]{jo0}P. J{\o}rgensen, Local cohomology for non-commutative graded algebras,
                 Comm. Algebra $\mathbf{25}$ (1997), 575-591.
\bibitem[Jo2]{jo1}P. J{\o}rgensen, Non-commutative graded homological identities,
                 J. London Math . Soc. $\mathbf{57}$ (1998), 336-350.
\bibitem[Jo3]{jo3}P. J{\o}rgensen, Non-commutative Castelnuovo-Mumford
                  regularity,  Math. Proc. Camb. Phil. Soc. $\mathbf {125}$ (1999), 203-221.

\bibitem[Jo4]{jo4}P. J{\o}rgensen, Linear free resolutions over non-commutative algebras,
                 Compos. Math. $\mathbf{140}$ (2004), 1053-1058.
\bibitem[Jo5]{jo5}P. J{\o}rgensen, Gorenstein homomorphism of Non-commutative
                 Rings, J. ALgebra $\mathbf{211}$ (1999), 240-267.
\bibitem[Mo]{mori}I. Mori, Homological properties of balanced Cohen-Macaulay algebras,
                 Trans. Amer. Math. Soc. $\mathbf{355}$ (2002), 1025-1042.
\bibitem[Mu]{mu}D. Mumford, {\it Lectures on Curves on an Algebraic Surface}, Ann. of Math. Studies,
                 V. {\bf 59}, Princeton University Press,
                 Princeton, NJ, 1966.
\bibitem[R\"{o}]{ro}R\"{o}mer, Tim, On the regularity over positively graded algebras,
                 J. Algebra $\mathbf{319}$ (2008), 1-15.
\bibitem[VdB]{vdb}M. Van den Bergh, Existence theorems for dualizing complexes over non-commutative
                  graded and filtered rings, J. Algebra $\mathbf{195}$ (1997), 662-679.
\bibitem[We]{we} C. Weibel, {\it An Introduction to Homological Algebra}, Cambridge University Press,
                 Cambridge, 1994.
\bibitem[WZ]{wz} Q. -S. Wu and J. J. Zhang, Homological identities
                 for Non-commutative rings, J. Algebra $\mathbf{242}$ (2001), 516-535.
\bibitem[SZ]{sz} J. T. Stafford and J. J. Zhang, Homological properiies  of (graded) noetherian PI rings,
                 J. Algebra $\mathbf{168}$ (1994), 988-1026.
\bibitem[Ye]{ye}A. Yekutieli, Dualizing complexes over non-commutative graded algebras,
                J. Algebra $\mathbf{153}$ (1992), 41-84.
\end{thebibliography}

\end{document}